\providecommand{\U}[1]{\protect\rule{.1in}{.1in}}
\newenvironment{proof}[1][Proof]{\noindent\textbf{#1.} }{\ \rule{0.5em}{0.5em}}
\newtheorem{theorem}{Theorem}[section]
\numberwithin{equation}{section}
\begin{document}

\title{Second Order Asymptotic Development for the Anisotropic Cahn-Hilliard Functional}
\author{Gianni Dal Maso\\SISSA, \\Via Bonomea 265, 34136 Trieste, Italy
\and Irene Fonseca\\Department of Mathematical Sciences,\\Carnegie Mellon University,\\Pittsburgh PA 15213-3890, USA
\and Giovanni Leoni\\Department of Mathematical Sciences, \\Carnegie Mellon University, \\Pittsburgh PA 15213-3890, USA}
\maketitle

\begin{abstract}\noindent The asymptotic behavior of an anisotropic Cahn-Hilliard functional with prescribed mass and Dirichlet boundary 
condition is studied when the parameter $\varepsilon$ 
 that determines the width of the transition layers tends to zero. 
The  double-well potential is assumed to be even and equal to $|s-1|^\beta$ near $s=1$, with  $1<\beta<2$.  
The first order term  in the asymptotic development by
$\Gamma$-convergence is well-known, and is related to a suitable anisotropic perimeter of the interface. Here it is shown that, 
under these assumptions, the second order term is zero, which gives an estimate on the rate of convergence of the minimum values.
\end{abstract}

\noindent{\bf Keywords}: Gamma-convergence, Cahn--Hilliard functional, phase transitions.

\noindent{\bf MSC2010}: 49J45, 49Q20, 35B25.



\section{Introduction}

In this paper we study the second order term in the asymptotic development by
$\Gamma$-convergence for the anisotropic Cahn-Hilliard functional (see, e.g., \cite{MM}, 
\cite{Gurtin85}, \cite{Mod87}, \cite{sternberg88}, \cite{fonseca-tartar}, \cite{bouchitte},  \cite{barroso-fonseca})%
\begin{equation}
\mathcal{W}_{\varepsilon}\left(  u\right)  :=\int_{\Omega}\left(  W\left(
u\left(  x\right)  \right)  +\varepsilon^{2}\Phi^{2}\left(  \nabla u\left(
x\right)  \right)  \right)  \,dx\,, \label{cahn-hilliard functional}%
\end{equation}
where $\Omega$ is a bounded open set in $\mathbb{R}^{n}$, $n\ge 2$, with Lipschitz
boundary. Here  $W:\mathbb{R}\rightarrow\left[  0,+\infty\right)  $ is an even
function of class $C^{1}$ such that $W\left(  s\right)  =0$ if and only if
$s=\pm1$, with $W(s)=|s-1|^\beta$ near $s=1$ for some $1<\beta<2$, and $\Phi:\mathbb{R}^{n}\rightarrow\left[  0,+\infty\right)$ is convex, even,  and
positively homogeneous of degree one.

We impose a mass constraint and a boundary condition: 
\begin{equation}
u\in H^{1}\left(  \Omega\right)  \,,\quad\int_{\Omega}u\left(  x\right)
\,dx=m\,,\quad\text{and\quad}u=1\text{ on }\partial\Omega\,,
\label{constraint}%
\end{equation}
where $m$ is a prescribed constant satisfying the inequalities
\begin{equation}
-\left\vert \Omega\right\vert <m<\left\vert \Omega\right\vert \,.
\label{inequalities m}%
\end{equation}

Given  a sequence of functionals $F_{\varepsilon}:X\rightarrow\left(
-\infty,\infty\right]  $  defined on a metric space $X$, we write
$F_\varepsilon \overset{\Gamma}{\rightarrow} F^{(0)}$ if $\{F_{\varepsilon} \}$ $\Gamma$-converges to $F^{(0)}$, as $\varepsilon\to 0+$,  with respect to the metric topology of $X$. We recall the notion of \emph{asymptotic development by $\Gamma$-convergence of
order $k$}: 
\[
F_{\varepsilon}\overset{\Gamma}{=}F^{\left(  0\right)  }+\varepsilon
F^{\left(  1\right)  }+\cdots+\varepsilon^{k}F^{\left(  k\right)  }+o\left(
\varepsilon^{k}\right)
\]
if $F_{\varepsilon}\overset{\Gamma}{\rightarrow} F^{(0)}$ and 
\begin{equation}
F_{\varepsilon}^{\left(  i\right)  }:=\dfrac{F_{\varepsilon}^{\left(
i-1\right)  }-\inf\nolimits_{X}F^{\left(  i-1\right)  }}{\varepsilon
}
\overset{\Gamma}{\rightarrow}F^{\left(  i\right)  } \label{order i}%
\end{equation}
for $i=1,\ldots,k$, where $F_{\varepsilon}^{\left(  0\right)  }%
:=F_{\varepsilon}$ (see \cite{anzellotti-baldo93},
\cite{anzellotti-baldo-orlandi96}, \cite[Section 1.10]{Braid2002}).

For the sequence of functionals (\ref{cahn-hilliard functional}) we take $X:=
L^{1}\left(  \Omega\right)  $ and we set $\mathcal{W}_{\varepsilon}\left(
u\right)  :=+\infty$ if 
 (\ref{constraint}) is not satisfied. The
zero order term is
\begin{equation}
\mathcal{W}^{(0)}\left(  u\right)  :=\int_{\Omega}W\left(  u\left(
x\right)  \right)  \,dx \nonumber
\end{equation}
if the mass constraint in \eqref{constraint} is satisfied and $\mathcal{W}^{(0)}\left(  u\right)  :=+\infty$ otherwise. The $\Gamma$-liminf inequality is a consequence of Fatou's Lemma. The
$\Gamma$-limsup inequality is straightforward. 

Note that $\inf\nolimits_{X}%
\mathcal{W}^{(0)}=0$ and the minimizers are given by all functions of the form
$u_{E}:=1-2\chi_{E}$, where $E$ is an arbitrary measurable subset of $\Omega$
satisfying the volume constraint
\begin{equation}
\left\vert E\right\vert =\frac{\left\vert \Omega\right\vert -m}{2}=:V_{m} \,,
\label{volume constraint E}%
\end{equation}
which is equivalent to the mass constraint in (\ref{constraint}) for $u_{E}$.  Here, and in what follows, $\chi_{E}$
is the characteristic function of $E$ defined by $\chi_{E}:=1$ on $E$ and
$\chi_{E}:=0$ on $\Omega\setminus E$

To study the first order term for (\ref{cahn-hilliard functional}), we
introduce the rescaled functionals defined by
\begin{equation}
\mathcal{F}_{\varepsilon}\left(  u\right)  :=\int_{\Omega}\left(  \frac
{1}{\varepsilon}W\left(  u\left(  x\right)  \right)  +\varepsilon\Phi
^{2}\left(  \nabla u\left(  x\right)  \right)  \right)  \,dx
\nonumber
\end{equation}
if  (\ref{constraint}) is satisfied. We
extend $\mathcal{F}_{\varepsilon}$ to $L^{1}\left(  \Omega\right)  $ by
setting $\mathcal{F}_{\varepsilon}\left(  u\right)  :=+\infty$ if  (\ref{constraint})
is not satisfied.

By adapting well-known arguments developed in \cite{barroso-fonseca},
\cite{bouchitte}, \cite{MM}, \cite{Mod87}, \cite{sternberg88}, it can be shown
(see Theorem \ref{theorem gamma convergence} below) that the first order term
$\mathcal{W}^{(1)}$ for (\ref{cahn-hilliard functional}), which by
(\ref{order i}) coincides with the $\Gamma$-limit of $\left\{  \mathcal{F}%
_{\varepsilon}\right\}  $, is given by%
\begin{equation}
\mathcal{F}_{0}\left(  u\right)  :=c_{W}\operatorname{P}_{\Phi}\left(
E\right)  \label{functional F 0}%
\end{equation}
if
\begin{equation}
u=u_{E}:=1-2\chi_{E}\,,\quad E\subset\Omega\,,\quad\operatorname{P}\left(  E\right)
<+\infty\,,\,\text{ and}\quad \left\vert E\right\vert =V_m\,\,,
\label{constraint inequality 2*}%
\end{equation}
while $\mathcal{F}_{0}\left(  u\right)
:=+\infty$ if \eqref{constraint inequality 2*} is not satisfied.  Here 
\begin{equation}
c_{W}:=2\int_{-1}^{1}\sqrt{W\left(  s\right)  }\,ds \label{cW}%
\end{equation}
and $\operatorname{P}_{\Phi}$ is the $\Phi$-perimeter, defined for every
$E\subset\mathbb{R}^{n}$ with finite perimeter by%
\begin{equation}
\operatorname{P}_{\Phi}\left(  E\right)  :=\int_{\partial^{\ast}E}\Phi\left(
\nu_{E}(x)\right)  \,d\mathcal{H}^{n-1}(x)\,, \label{perimeter gamma}%
\end{equation}
where $\partial^{\ast}E$ is the reduced boundary of $E$, $\nu_{E}$ is the
measure theoretic outer unit normal of $E$, and $\mathcal{H}^{n-1}$ is the
$(n-1)$-dimensional Hausdorff measure. Observe that in contrast with the results in the literature 
just quoted, due to the boundary condition in \eqref{constraint} in \eqref{functional F 0} we obtain  the full $\Phi$-perimeter of $E$ as opposed to the relative $\Phi$-perimeter of $E$ in $\Omega$.

The main goal of this paper is to study the second order term $\mathcal{W}%
^{(2)}$ for (\ref{cahn-hilliard functional}). Under some additional assumptions on $\Omega$ and $W$ (see \eqref{W prime},  \eqref{lim W prime}, \eqref{exists},  and  \eqref{enlarged ball} in Section \ref{s:polars}), we prove that
$\mathcal{W}^{(2)}\left(  u \right)  =0$ if $u$ is a minimizer of
$\mathcal{F}_{0}$ and $\mathcal{W}^{(2)}\left(  u \right)  =+\infty$
otherwise. The second assertion is trivial. By (\ref{order i}) the first
assertion amounts to proving the following properties:

\begin{itemize}
\item[(a)] ($\Gamma$-liminf inequality) for every sequence $\left\{
u_{\varepsilon}\right\}  \subset H^{1}\left(  \Omega\right)  $ satisfying
(\ref{constraint}) and converging strongly in
$L^{1}(\Omega)$ to a minimizer $u_{0}$ of $\mathcal{F}_{0}$, we have
\begin{equation}
\liminf_{\varepsilon\rightarrow0+}\frac{\mathcal{F}_{\varepsilon}\left(
u_{\varepsilon}\right)  -\mathcal{F}_{0}\left(  u_{0}\right)  }{\varepsilon
}\geq0\,; 
\label{second gamma liminf}%
\end{equation}

\item[(b)] ($\Gamma$-limsup inequality) for every minimizer $u_{0}$ of
$\mathcal{F}_{0}$ there exists a sequence $\left\{  u_{\varepsilon}\right\}
\subset H^{1}\left(  \Omega\right)  $ converging strongly to $u_{0}$ in $L^{1}(\Omega
)$, satisfying  (\ref{constraint}) and 
such that
\begin{equation}
\limsup_{\varepsilon\rightarrow0+}\frac{\mathcal{F}_{\varepsilon}\left(
u_{\varepsilon}\right)  -\mathcal{F}_{0}\left(  u_{0}\right)  }{\varepsilon
}\leq0\,. \label{second gamma limsup}%
\end{equation}

\end{itemize}

By standard properties of $\Gamma$-convergence the inequalities (a) and (b) imply that 
$$\min \mathcal{F}_{\varepsilon}=\min \mathcal{F}_{0}+o(\varepsilon)=c_W\operatorname{P}_{\Phi}(E_0)+o(\varepsilon)\,,$$
where $E_0$ is a minimizer of $\operatorname{P}_{\Phi}$ under the constraint \eqref{constraint inequality 2*}, 
which gives
$$\min \mathcal{W}_{\varepsilon}=\varepsilon c_W\operatorname{P}_{\Phi}(E_0)+o(\varepsilon^2)\,.$$

A similar problem was studied in \cite{anzellotti-baldo-orlandi96} for the single-well potential $W(s)=s^2$ without  imposing the mass constraint  and assuming a strictly positive boundary condition $g$. This forces a transition near $\partial\Omega$ and leads to a second order  term $\mathcal{W}%
^{(2)}$ in the asymptotic expansion of the form $\frac{1}{2}\int_{\partial \Omega}g^2K\,d\mathcal{H}^{n-1}$, where $K$ is the mean curvature of $\partial\Omega$.

We conclude by discussing  our hypotheses. The assumption that  $W$ is even  
is used in a crucial way to cancel many terms in the estimates due to symmetry arguments. 
The hypothesis that  $W(s)=|s-1|^\beta$ near $s=1$ for some $1<\beta<2$ is also important. 
Indeed, in the case $\beta=2$ and without assuming the boundary condition in  \eqref{constraint}, it can be shown that  the second order term $\mathcal{W}%
^{(2)}$ in the asymptotic expansion may be different from zero (see \cite{leoni-murray}). 

Finally, we observe that the case $n=1$ is completely different, since the minimizers of $\operatorname{P}_{\Phi}$ under the constraint \eqref{constraint inequality 2*} are intervals and so the geometry plays no role. However, different nontrivial issues have been addressed (see, e.g., in  \cite{bellettini} and \cite{carr-gurtin-slemrod1984}, and also \cite{anzellotti-baldo93}).

\section{Preliminaries}

\label{s:polars}

Let $W:\mathbb{R}\rightarrow\mathbb{R}$ be a double well potential of class
$C^{1}$ such that $W\geq0$ and $W\left(  s\right)  =0$ if and only if $s=\pm
1$. Assume, in addition, that%
\begin{align}
&W\left(  s\right)  =W\left(  -s\right)  \,, \label{W even}\\
&W^\prime(s)>0 \quad \text{for }s>1\,, \label{W prime}\\
&\liminf_{s\to+\infty}W'(s)>0\,,\label{lim W prime}
\end{align}
and that there exist two constants
$0<a<1$ and $1<\beta<2$ such that%
\begin{equation}
W\left(  s\right)  = \left|  s-1\right|  ^{\beta}\quad\text{for }1-a\leq
s\leq1+a\,. \label{W beta}%
\end{equation}

Let $z$  be the unique global solution with values in $[-1,1]$ of the Cauchy problem%
\begin{equation}
z^{\prime}(t)=\sqrt{W\left(  z(t)\right)  }\,,\quad z\left(  0\right)  =0\,.
\label{Cauchy problem}%
\end{equation}
A rescaled version of this function will play an important role in the construction of the
recovery sequence (see \eqref{hat u epsilon}) for the $\Gamma$-limsup inequality \eqref{second gamma limsup} .  For this reason, the function $z$ will be called the ``optimal profile'' of the phase transition.

For $-1<z\left(  t\right)  <1$ we obtain, by integration,
\[
t=\int_{0}^{z\left(  t\right)  }\!\!\!\! \frac{ds}{\sqrt{W\left(  s\right)  }%
}\,.
\]
It follows that $z$ is odd and $z(t)=1$ for all $t\ge \tau_W$, where
\begin{equation}
\tau_{W}:=\int_{0}^{1}\! \frac{ds}{\sqrt{W\left(  s\right)  }}\,,
\label{tau W}%
\end{equation}
which is finite thanks to (\ref{W beta}) since $\beta<2$. Moreover $-1<z(t)<1$ for $-\tau
_{W}<t<\tau_{W}$. 

Define
\begin{equation}
c_{W}:=\int_{-\tau_{W}}^{\tau_{W}}\left(  W\left(  z\left(  t\right)  \right)
+\left\vert z^{\prime}\left(  t\right)  \right\vert ^{2}\right)  \,dt\,.
\label{constant cW}%
\end{equation}
Note that by (\ref{Cauchy problem}) we have%
\begin{equation}
\int_{-\tau_{W}}^{\tau_{W}}W\left(  z\left(  t\right)  \right)  \,dt=\int_{-\tau_{W}}^{\tau_{W}}\left\vert z^{\prime}\left(  t\right)  \right\vert ^{2}\,dt=\frac{c_{W}}%
{2}\,, 
\label{constant cW 2}%
\end{equation}
therefore (\ref{cW}) holds.
It can be shown that for every $b\ge \tau_W$ the function $z$ is the unique solution of the minimum problem
\begin{equation}\label{minimum cW}
\min_{w\in H^{1}\left(  -b,b\right)  \,, \,w\left(  0\right)  =0}\int%
_{-b}^{b}\left(  W\left(  w\left(  s\right)  \right)  +\left\vert w^{\prime
}\left(  s\right)  \right\vert ^{2}\right)  \,ds=c_{W}\,.
 \end{equation}

Let $\Phi:\mathbb{R}^{n}\rightarrow\mathbb{R}$, $n\ge 2$, be convex, even, and positively
homogeneous of degree one, such that
\begin{equation}
c_{\Phi} \left\vert \xi\right\vert \leq\Phi\left(  \xi\right)  \leq C_{\Phi
}\left\vert \xi\right\vert \label{growth Gamma}%
\end{equation}
for all $\xi\in\mathbb{R}^{n}$ and for some $C_{\Phi}\geq c_{\Phi} >0$, where
$\left\vert \cdot\right\vert $ is the Euclidean norm in $\mathbb{R}^{n}$.

The polar function $\Phi^{\circ}$ is defined by
\[
\Phi^{\circ}\left(  \eta\right)  :=\sup_{\xi\neq0} \frac{\eta\cdot\xi}%
{\Phi\left(  \xi\right)  }
\]
for every $\eta\in\mathbb{R}^{n}$. It turns out that $\Phi^{\circ}$ is convex, even, positively homogeneous of degree one on $\mathbb{R}^{n}$ (see \cite{R}), and 
\begin{equation}\label{polar diff}
\Phi(\nabla\Phi^{\circ}\left(\eta\right))=1\quad\text{ for a.e. }\eta\in\mathbb{R}^n\,.
\end{equation}

Moreover, it
satisfies the inequalities
\[
\frac{1}{C_{\Phi}} \left\vert \eta\right\vert \leq\Phi^{\circ}\left(
\eta\right)  \leq\frac{1}{c_{\Phi}}\left\vert \eta\right\vert
\]
for every $\eta\in\mathbb{R}^{n}$.

The ball with respect to the norm $\Phi^{\circ}$ centered at $x_0\in\mathbb{R}^{n}$ and with radius $\rho>0$ is denoted by
\begin{equation}
B^{\Phi^{\circ}}_{\rho}\!\!\left(  x_{0} \right)  :=\left\{  x\in\mathbb{R}%
^{n}:\,\Phi^{\circ}\!\left(  x-x_{0}\right)  < \rho\right\}  \,,
\label{Wulff set}%
\end{equation}
Observe that 
\begin{align}
\left\vert B^{\Phi^{\circ}}_{\rho}\!\!\left(  x_{0} \right)  \right\vert  &
=\kappa_{\Phi}\rho^{n}\,,
\nonumber
\\
\operatorname{P}_{\Phi}\!\left(  B^{\Phi^{\circ}}_{\rho}\!\!\left(  x_{0}
\right)  \right)   &  =n\kappa_{\Phi}\rho^{n-1}\,, \label{perimeter rho}%
\end{align}
where  $\operatorname{P}_{\Phi}$ is the $\Phi$-perimeter introduced in (\ref{perimeter gamma}), and
\begin{equation}
\kappa_{\Phi}:=\left\vert B^{\Phi^{\circ}}_{1}\!\!\left(  0 \right)
\right\vert \,. \label{kappa Phi}%
\end{equation}
 It is easy to check (see, e.g., \cite{AFTL,Fon}) that for every measurable function $w:\left[  0, R\right]
\rightarrow[0,+\infty]$ we have
\begin{equation}
\int_{B^{\Phi^{\circ}}_{R}\!\left(  x_{0} \right)  } w\left(  \Phi^{\circ
}\left(  x-x_{0}\right)  \right)  \,dx =n\kappa_{\Phi}\int_{0}^{R} w\left(
\rho\right)  \,\rho^{n-1} d\rho\,, \label{integration polar}%
\end{equation}
Moreover, if $\pm w$ is nondecreasing and the composite function $v(x):= w(
\Phi^{\circ}(x-x_0))$ belongs to $H^{1}\left(  B^{\Phi^{\circ}}_{R}\!\!\left(  x_{0}
\right)  \right)  $, then (see \eqref{polar diff})
\begin{equation}
\Phi\left(  \nabla v \left(  x\right)  \right)  =\pm w^{\prime}\left(
\Phi^{\circ} \left(  x-x_{0}\right)  \right)  \quad\text{for a.e.}\ x\in
B^{\Phi^{\circ}}_{R}\!\!\left(  x_{0} \right)  \,. \label{derivative polar}%
\end{equation}

The geometry of the minimizers of $\operatorname{P}_{\Phi}$ in $\mathbb{R}^N$ with prescribed volume $V>0$ is well-known. Indeed, it was established in \cite{Fon} and \cite{FonMu} (see also \cite{Tay1},
\cite{Tay2}, \cite{Tay3}) that the minimum of the problem
\begin{equation}
\min\left\{  \operatorname{P}_{\Phi}\left(  E\right)  :\,E\subset\mathbb{R}^n\text{ with finite
perimeter, }\left\vert E\right\vert =V\right\}  
\nonumber
\end{equation}
is attained by  all balls \eqref{Wulff set} centered at an arbitrary point $x_{0}\in\mathbb{R}^n$ and with radius $\rho>0$ chosen so that $\left\vert
B^{\Phi^{\circ}}_{\rho}\!\!\left(  x_{0} \right)  \right\vert =V$.  These balls are called  
 \emph{Wulff sets} for the $\Phi$-perimeter after the pioneering work of Wulff 
\cite{Wulff}.

Let $\Omega$ be a bounded open set of $\mathbb{R}^{n}$ with Lipschitz
boundary, let $m\in\mathbb{R}$ be a constant satisfying  \eqref{inequalities m}, and let $V_m$ be defined by \eqref{volume constraint E}.
 
We are interested in the minimum problem
\begin{equation}
\min\left\{  \operatorname{P}_{\Phi}\left(  E\right)  :\,E\text{ set of finite
perimeter, }E\subset\Omega, \left\vert E\right\vert =V_{m}\right\}\,.
\label{min perimeter Omega}%
\end{equation}
Let $r>0$ be such that $\left\vert B^{\Phi^{\circ}%
}_{r}\!\!\left(  0 \right)  \right\vert =V_{m}$, that is, \begin{equation}
r:=\left(  \frac{\left\vert \Omega\right\vert -m}{2\kappa_{\Phi}} \right)
^{\!\!1/n}\,, \label{r}%
\end{equation}
which gives
\begin{equation}
m=\left\vert \Omega\right\vert - 2\kappa_{\Phi} r^{n}=\left\vert \Omega\right\vert -2\left\vert B^{\Phi^{\circ}}_{r}\!\!\left( 0 \right)\right\vert \,.
\label{constraint reached}%
\end{equation}
A minimizer of \eqref{min perimeter Omega} is attained at any Wulff set $B^{\Phi^{\circ}}_{r}\!\!\left(  x_{0} \right)  $ contained in $\Omega$, provided there is at least one. For this reason we assume that 
 there exists $y_{0}\in\Omega$ such that%
\begin{equation}
B^{\Phi^{\circ}}_{r}\!\!\left(  y_{0} \right)  \subset\Omega\,.\label{exists}
\end{equation}
Our results are strongly hinged on this assumption. 

We observe that there exists  a Wulff set contained in $\Omega$ provided that $m$ is close 
to $|\Omega|$, which corresponds to $V_m$ and $r$ sufficiently small. 

For technical reasons, related to the proof of the $\Gamma$-limsup inequality, we further assume 
 that, whenever $B^{\Phi^{\circ}%
}_{r}\!\!\left(  x \right)  \subset\Omega$ for  $x\in\Omega$, then there exist $y\in\Omega$
and $\delta>0$ with
\begin{equation}
B^{\Phi^{\circ}}_{r}\!\!\left(  x \right)  \subset B^{\Phi^{\circ}%
}_{r+\delta}\!\left(  y \right)  \subset\Omega\,. \label{enlarged ball}%
\end{equation}

Given $\varepsilon>0$, we define
\begin{equation}
\mathcal{E}_{\varepsilon}\left(  u\right)  :=\int_{\Omega}\left(  \frac
{1}{\varepsilon}W\left(  u\left(  x\right)  \right)  +\varepsilon\Phi
^{2}\left(  \nabla u\left(  x\right)  \right)  \right)  \,dx
\label{functional E epsilon omega}%
\end{equation}
for $u\in H^{1}\left(  \Omega\right)$. 
Some arguments in what follows will require a localization of this energy, i.e., for every bounded open set
$A$ of $\mathbb{R}^{n}$ with Lipschitz boundary and for every $\varepsilon>0$, 
we define%
\begin{equation}\label{E local}
 \mathcal{E}_{\varepsilon}\left(  u,A\right)  :=\int_{A}\left(  \frac
{1}{\varepsilon}W\left(  u\left(  x\right)  \right)  +\varepsilon\Phi
^{2}\left(  \nabla u\left(  x\right)  \right)  \right)  \,dx
\end{equation}
if $u\in H^{1}\left(  A\right)  $ and $\mathcal{E}_{\varepsilon}\left(  u,A\right)  :=+\infty$ if $u\in L^{1}\left(  A\right)  \setminus H^{1}\left(
A\right)  $. Note that $\mathcal{E}_{\varepsilon}\left(  \cdot,\Omega\right)=
\mathcal{E}_{\varepsilon}(  \cdot)$ as defined in \eqref{functional E epsilon omega}.

Consider the constrained functionals defined on $L^1(\Omega)$ by
\begin{equation}
\mathcal{F}_{\varepsilon}\left(  u\right)  :=\begin{cases}
\mathcal{E}_{\varepsilon}\left(  u\right)& \text{if }u \text{ satisfies \eqref{constraint},} \\
+\infty& \text{otherwise}
\end{cases}
\nonumber
\end{equation}
and 
\begin{equation}
\mathcal{F}_{0}\left(  u\right)  :=\begin{cases}
c_{W}\operatorname{P}_{\Phi}\left(
E\right)& \text{if }u \text{ satisfies \eqref{constraint inequality 2*},} \\
+\infty& \text{otherwise,}
\end{cases}
\label{functional F zero}%
\end{equation}
where $c_{W}$ is defined in (\ref{constant cW}). 
 It is
important to observe that in (\ref{functional F zero}) the $\Phi$-perimeter
$\operatorname{P}_{\Phi}\left(  E\right)  $ is defined by integrating over all
the reduced boundary $\partial^{\ast}E$ of $E$ and not only on $\Omega
\cap\partial^{\ast}E$, i.e.,  we consider the $\Phi$-perimeter in
$\mathbb{R}^{n}$ and not the relative $\Phi$-perimeter in $\Omega$.

A minimizer of $\mathcal{F}_{0}$ is a function of the form $u_{0}%
=1-2\chi_{E_{0}}$, where $E_{0}\subset\Omega$ is a minimizer of \eqref{min perimeter Omega}. 
Hence,  $E_{0}$ has the form
$B^{\Phi^{\circ}}_{r}\!\!\left(  x_{0} \right)  $, with $B^{\Phi^{\circ}}%
_{r}\!\!\left(  x_{0} \right)  \subset\Omega$ and $r$ defined by (\ref{r}).
Then (\ref{perimeter rho}) gives
\begin{equation}
\operatorname{P}_{\Phi}\left(  E_{0}\right)  = \operatorname{P}_{\Phi}\left(
B^{\Phi^{\circ}}_{r}\!\!\left(  x_{0} \right)  \right)  =n\kappa_{\Phi}
r^{n-1}\,. \label{minimal perimeter value}%
\end{equation}

We now state the main result of this section.

\begin{theorem}
\label{theorem gamma convergence}The family $\left\{  \mathcal{F}%
_{\varepsilon}\right\}  $ $\Gamma$-converges in $L^{1}\left(  \Omega\right)  $
to $\mathcal{F}_{0}$.
\end{theorem}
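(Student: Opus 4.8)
The plan is to prove the $\Gamma$-convergence by combining the liminf and limsup inequalities, adapting the classical Modica--Mortola argument to the anisotropic setting with the boundary condition $u=1$ on $\partial\Omega$. The key extra feature compared to the literature is that, because of the boundary condition, transitions that would otherwise be allowed on $\partial\Omega$ are penalized, so the limit functional sees the \emph{full} $\Phi$-perimeter of $E$ rather than the relative one; the device to capture this is to extend every competitor $u$ by the constant $1$ to a fixed larger Lipschitz domain $\Omega'\supset\supset\Omega$ and work on $\Omega'$, where the relative $\Phi$-perimeter in $\Omega'$ of a set $E\subset\Omega$ equals $\operatorname{P}_\Phi(E)$.

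\medskip

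\noindent\textbf{Liminf inequality.} Given $u_\varepsilon\to u$ in $L^1(\Omega)$ with $\liminf_\varepsilon \mathcal{F}_\varepsilon(u_\varepsilon)<+\infty$, I would first reduce to the case where $u_\varepsilon$ satisfies \eqref{constraint} and the energies are bounded, then extend each $u_\varepsilon$ by $1$ to $\Omega'$ (the extension lies in $H^1(\Omega')$ precisely because $u_\varepsilon=1$ on $\partial\Omega$, and adds no energy since $W(1)=0$, $\nabla 1=0$). Introduce the primitive $G(s):=\int_0^s \sqrt{W(\tau)}\,d\tau$ and, using the elementary inequality $\tfrac1\varepsilon W(u)+\varepsilon\Phi^2(\nabla u)\ge 2\sqrt{W(u)}\,\Phi(\nabla u)=2\Phi(\sqrt{W(u)}\nabla u)$ together with the chain rule $\nabla(G\circ u_\varepsilon)=\sqrt{W(u_\varepsilon)}\nabla u_\varepsilon$ and the 1-homogeneity of $\Phi$, obtain
\[
\mathcal{E}_\varepsilon(u_\varepsilon,\Omega')\ \ge\ 2\int_{\Omega'}\Phi\bigl(\nabla(G\circ u_\varepsilon)\bigr)\,dx\,.
\]
Since $u_\varepsilon\to u=u_E=1-2\chi_E$ in $L^1$, we have $G\circ u_\varepsilon\to G\circ u_E$ in $L^1(\Omega')$, and $G\circ u_E$ takes only the two values $G(1)$ and $G(-1)$, with $G(1)-G(-1)=\int_{-1}^1\sqrt{W}\,ds=c_W/2$. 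By lower semicontinuity of the anisotropic total variation $v\mapsto \int_{\Omega'}\Phi(\nabla v)$ with respect to $L^1$ convergence (a standard fact for convex 1-homogeneous integrands; one can cite \cite{AFTL} or reprove it via the dual formulation $\sup\{\int v\,\mathrm{div}\,\sigma:\ \Phi^\circ(\sigma)\le1\}$), we conclude
\[
\liminf_\varepsilon \mathcal{F}_\varepsilon(u_\varepsilon)\ \ge\ 2\,|D(G\circ u_E)|_\Phi(\Omega')\ =\ 2\cdot\tfrac{c_W}{2}\,\operatorname{P}_\Phi(E;\Omega')\ =\ c_W\operatorname{P}_\Phi(E)\,,
\]
the last equality because $E\subset\subset\Omega'$ so the relative $\Phi$-perimeter in $\Omega'$ coincides with $\operatorname{P}_\Phi(E)$. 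The mass constraint passes to the limit by $L^1$ convergence, so $u$ satisfies \eqref{constraint inequality 2*} and $\mathcal{F}_0(u)\le\liminf_\varepsilon\mathcal{F}_\varepsilon(u_\varepsilon)$.

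\medskip

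\noindent\textbf{Limsup inequality.} Here I would build a recovery sequence for $u_0=u_{E_0}$, $E_0=B^{\Phi^\circ}_r(x_0)\subset\subset\Omega$. The model construction replaces the sharp interface $\partial^* E_0$ by a diffuse profile of width $O(\varepsilon)$: set $d(x):=\Phi^\circ(x-x_0)-r$ (the signed $\Phi^\circ$-distance to $\partial E_0$, negative inside), and define $u_\varepsilon(x):=z\bigl(d(x)/\varepsilon\bigr)$ where $z$ is the optimal profile of \eqref{Cauchy problem}; note $z=\pm1$ outside $(-\tau_W,\tau_W)$, so $u_\varepsilon\equiv -1$ well inside $E_0$ and $u_\varepsilon\equiv 1$ outside an $O(\varepsilon)$-neighborhood of $\partial E_0$, in particular $u_\varepsilon=1$ near $\partial\Omega$ for $\varepsilon$ small, so the boundary condition holds. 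Using \eqref{derivative polar}, $\Phi(\nabla u_\varepsilon)=\tfrac1\varepsilon|z'(d/\varepsilon)|$ a.e., and by the coarea-type formula \eqref{integration polar} applied to the radial-in-$\Phi^\circ$ structure,
\[
\mathcal{E}_\varepsilon(u_\varepsilon,\Omega)=\int_{\{|d|<\tau_W\varepsilon\}}\Bigl(\tfrac1\varepsilon W(z(d/\varepsilon))+\tfrac1\varepsilon |z'(d/\varepsilon)|^2\Bigr)dx\ \xrightarrow[\varepsilon\to0]{}\ c_W\operatorname{P}_\Phi(E_0)\,,
\]
after the change of variables $t=d/\varepsilon$ and using \eqref{constant cW}, \eqref{perimeter rho} (the measure of $\{\Phi^\circ(x-x_0)=\rho\}$-slices scales like $n\kappa_\Phi\rho^{n-1}$). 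The one remaining defect is the mass: $\int_\Omega u_\varepsilon$ differs from $m=\int_\Omega u_0$ by an $O(\varepsilon)$ error coming from the transition layer. To fix it I would use assumption \eqref{enlarged ball}: slightly dilate the ball to $B^{\Phi^\circ}_{r+\delta_\varepsilon}(y)\subset\Omega$ with $\delta_\varepsilon=O(\varepsilon)\to0$ chosen so that the dilated diffuse profile has \emph{exactly} the right mass; since $\operatorname{P}_\Phi(B^{\Phi^\circ}_{r+\delta_\varepsilon}(y))=n\kappa_\Phi(r+\delta_\varepsilon)^{n-1}\to n\kappa_\Phi r^{n-1}=\operatorname{P}_\Phi(E_0)$, the energy still converges to the right limit, and $u_\varepsilon\to u_0$ in $L^1$ because $\delta_\varepsilon\to0$. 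This yields $\limsup_\varepsilon\mathcal{F}_\varepsilon(u_\varepsilon)\le\mathcal{F}_0(u_0)$.

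\medskip

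\noindent The main obstacle is the mass-constraint correction in the limsup step: one must perturb the recovery sequence to hit the prescribed mass \emph{exactly} at every $\varepsilon$ while controlling the energy cost, and this is exactly where the structural hypotheses \eqref{exists}--\eqref{enlarged ball} on $\Omega$ enter. A secondary technical point is justifying the chain rule and the identity $\Phi(\nabla u_\varepsilon)=\tfrac1\varepsilon|z'(d/\varepsilon)|$ rigorously, which requires $d=\Phi^\circ(\cdot-x_0)-r$ to be Lipschitz with $\Phi(\nabla d)=1$ a.e.\ --- this is \eqref{polar diff} --- together with the regularity $u_\varepsilon\in H^1$, itself a consequence of $z$ being Lipschitz. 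Everything else is the by-now-standard Modica--Mortola machinery transplanted from $|\cdot|$ to $\Phi$.
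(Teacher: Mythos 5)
Your liminf argument is sound and is essentially a self-contained Modica--Mortola proof: the extension of competitors by $1$ to a larger Lipschitz set is exactly the device used in the paper, which however then simply invokes the localized $\Gamma$-convergence result of \cite[Theorem 3.5(i)]{bouchitte} rather than redoing the computation with $G(s)=\int_0^s\sqrt{W(\tau)}\,d\tau$. Two small points you should add there: first truncate $u_{\varepsilon}$ at $\pm1$ (this decreases both terms of the energy and is harmless for the liminf, since the truncated functions still converge to $u_E$), so that the chain rule for $G\circ u_{\varepsilon}$ and the convergence $G\circ u_{\varepsilon}\to G\circ u_E$ in $L^{1}$ are justified even though $\sqrt{W}$ need not be bounded on all of $\mathbb{R}$; and record that $\int_{\Omega}W(u_{\varepsilon})\,dx\le C\varepsilon$ together with Fatou's Lemma forces the limit to take only the values $\pm1$, which is what places it in the class \eqref{constraint inequality 2*}.

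The limsup part, as written, does not prove Theorem \ref{theorem gamma convergence}. A $\Gamma$-limsup inequality must provide a recovery sequence at \emph{every} point where $\mathcal{F}_{0}$ is finite, i.e.\ for every $u_E=1-2\chi_{E}$ with $E\subset\Omega$ of finite perimeter and $|E|=V_m$, not only for the minimizers $E_{0}=B^{\Phi^{\circ}}_{r}\!(x_{0})$. Your construction via the signed $\Phi^{\circ}$-distance applies only to the Wulff ball (it is, in substance, the recovery sequence needed for the second-order inequality of Theorem \ref{theorem gamma limsup}), and it does not cover, for instance, admissible sets $E$ whose reduced boundary meets $\partial\Omega$ on a set of positive $\mathcal{H}^{n-1}$-measure, which is precisely the situation where the full perimeter $\operatorname{P}_{\Phi}(E)$, as opposed to the relative one, must be attained by competitors equal to $1$ on $\partial\Omega$. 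What is missing is: (i) an approximation of a general admissible $E$ by sets $E_{j}\subset\subset\Omega$ with $|E_{j}|=|E|$ and $\operatorname{P}_{\Phi}(E_{j})\to\operatorname{P}_{\Phi}(E)$ (the paper obtains this via $C^{1}$ retractions of $\overline{\Omega}$ into $\Omega$), and (ii) a recovery construction for arbitrary compactly contained finite-perimeter sets that respects the exact mass constraint and the boundary condition (the paper quotes \cite[Theorem 3.5(ii)]{bouchitte} for this), followed by a diagonalization using the lower semicontinuity of the $\Gamma$-limsup. Incidentally, even in the Wulff-ball case your mass correction by dilating the radius requires moving the center as well, which is what hypothesis \eqref{enlarged ball} provides; but note that the paper's proof of Theorem \ref{theorem gamma convergence} does not use \eqref{enlarged ball} at all --- that assumption only enters the second-order construction.
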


If the boundary condition $u=1$ on $\partial\Omega$ is omitted and
$\operatorname{P}_{\Phi}\left(  E\right)  $ is replaced by by
$\operatorname{P}_{\Phi}\left(  E, \Omega\right)  :=\int_{\Omega\cap
\partial^{\ast}\!E}\Phi\left(  \nu_{E}\right)  \,d\mathcal{H}^{n-1}$, this
result has been established in \cite{MM}, \cite{Mod87}, \cite{sternberg88} for
the isotropic scalar-valued case, in \cite{fonseca-tartar} for the isotropic
vector-valued case, in \cite{bouchitte}, \cite{owen-sternberg} for the
anistropic, scalar-valued case, and in \cite{barroso-fonseca} for the
anisotropic, vector-valued case (see also \cite{Braid}). In the proof below we show
how to take into account the boundary condition.

\bigskip

\begin{proof}
[Proof of Theorem \ref{theorem gamma convergence}] 
Similarly to \eqref{functional E epsilon omega} and \eqref{E local}, we localize \eqref{functional F zero} as
\[
\mathcal{E}_{0}\left(  u,A\right)  :=c_{W}\operatorname{P}_{\Phi}\left(
E,A\right)
\]
if
\begin{equation}
u=1-2\chi_{E}\,,\text{ with }E\subset A\text{ with finite perimeter in }A\,, 
\label{u in A}%
\end{equation}
where%
\[
\operatorname{P}_{\Phi}\left(  E,A\right)  :=\int_{A\cap\partial^{\ast}E}%
\Phi\left(  \nu_{E}(x)\right)  \,d\mathcal{H}^{n-1}(x)%
\]
is the the relative $\Phi$-perimeter of $E$ in $A$. We extend 
$\mathcal{E}_{0}\left(  \cdot,A\right)$ to $L^{1}\left(  A\right)  $ 
by setting $\mathcal{E}_{0}\left(  u,A\right)  :=+\infty$ if (\ref{u in A}) is not satisfied. By \cite[Theorem
3.5(i)]{bouchitte} the family $\left\{  \mathcal{E}_{\varepsilon}\left(  \cdot,A\right)\right\}
$ $\Gamma$-converges in $L^{1}\left(  A\right)  $ to $\mathcal{E}_{0}\left(  \cdot,A\right)$.

Fix a sequence $\varepsilon_{k}\rightarrow0^{+}$ and define%
\[
\mathcal{F}_{0}^{\prime}:=\Gamma\text{-}\liminf_{k\rightarrow\infty
}\mathcal{F}_{\varepsilon_{k}}\quad\text{and}\quad\mathcal{F}_{0}%
^{\prime\prime}:=\Gamma\text{-}\limsup_{k\rightarrow\infty}\mathcal{F}%
_{\varepsilon_{k}}\,.
\]
We prove that $\mathcal{F}_{0}\leq\mathcal{F}_{0}^{\prime}$. Let $u\in
L^{1}\left(  \Omega\right)  $ be such that $\mathcal{F}_{0}^{\prime}\left(
u\right)  <+\infty$. Then there exists a sequence $\left\{  u_{k}\right\}  $
converging to $u$ strongly in $L^{1}\left(  \Omega\right)  $ and such that%
\[
  \liminf_{k\rightarrow\infty
}\mathcal{F}_{\varepsilon_{k}}\left(  u_{k}\right)=\mathcal{F}_{0}^{\prime}\left(  u\right)<+\infty  \,.
\]
Passing to a subsequence, not relabeled, we may assume that the liminf is a
limit and that $\mathcal{F}_{\varepsilon_{k}}\left(  u_{k}\right)  <+\infty$
for every $k$. By (\ref{constraint}) we have that $u_{k}\in H^{1}\left(
\Omega\right)  $, $\int_{\Omega}u_{k}\left(  x\right)  \,dx=m$, and $u_{k}=1$
on $\partial\Omega$. Fix a bounded open set $A$ of $\mathbb{R}^{n}$ with
Lipschitz boundary such that $\overline{\Omega}\subset A$, and extend $u_{k}$
and $u$ to $A$ by setting $u_{k}=u=1$ on $A\setminus\Omega$. Then $u_{k}\in
H^{1}\left(  A\right)  $ and $\left\{  u_{k}\right\}  $ converges to $u$ strongly in
$L^{1}\left(  A\right)  $. Hence, by \cite[Theorem 3.5(i)]{bouchitte},
\begin{equation}
\mathcal{E}_{0}\left(  u,A\right)  \leq\lim_{k\rightarrow\infty}%
\mathcal{E}_{\varepsilon_k}\left(  u_k,A\right) <+\infty\,.
\label{bou 1}%
\end{equation}
Therefore, there exists a set $E\subset A$ with finite perimeter such that
$u=1-2\chi_{E}$ in $A$. Since $u=1$ in $A\setminus\Omega$, it follows that
$E\subset\Omega$ up to a set of measure zero. Hence, $\mathcal{E}_{0}\left(  u,A\right)%
  =\operatorname{P}_{\Phi}\left(  E,A\right)
=\operatorname{P}_{\Phi}\left(  E\right)  =\mathcal{F}_{0}\left(  u\right)  $.
On the other hand, $\mathcal{E}_{\varepsilon_{k}}\left(  u_{k},A\right)
=\mathcal{F}_{\varepsilon_{k}}\left(  u_{k}\right)  $, since $W\left(
u_{k}\right)  =W\left(  1\right)  =0$ and $\Phi^{2}\left(  \nabla
u_{k}\right)  =\Phi^{2}\left(  0\right)  =0$ in $A\setminus\Omega$. Together
with (\ref{bou 1}), this shows that%
\[
\mathcal{F}_{0}\left(  u\right)  \leq\liminf_{k\rightarrow\infty}%
\mathcal{F}_{\varepsilon_{k}}\left(  u_{k}\right)  =\mathcal{F}_{0}^{\prime
}\left(  u\right)  \,.
\]
This concludes the proof of the inequality $\mathcal{F}_{0}\leq\mathcal{F}%
_{0}^{\prime}$.

We now prove that $\mathcal{F}_{0}^{\prime\prime}\leq\mathcal{F}_{0}$. Let
$u\in L^{1}\left(  \Omega\right)  $ be such that $\mathcal{F}_{0}%
\left(  u\right)  <+\infty$. By (\ref{constraint inequality 2*}%
) there exists a set $E\subset\Omega$ with finite perimeter such that
$u=1-2\chi_{E}$. Since $\Omega$ has a Lipschitz boundary, there exists a
sequence of sets $\left\{  E_{j}\right\}  $ of finite perimeter such that
$\chi_{E_{j}}\rightarrow\chi_{E}$ in $L^{1}\left(  \Omega\right)  $,
$\operatorname{P}_{\Phi}\left(  E_{j}\right)  \rightarrow\operatorname{P}%
_{\Phi}\left(  E\right)  $, $E_{j}\subset\subset\Omega$, and $\left\vert
E_{j}\right\vert =\left\vert E\right\vert $ for every $j$. One way to
construct $\left\{  E_{j}\right\}  $ is to consider a sequence $\left\{
r_{j}\right\}  $ of retractions $r_{j}:\mathbb{R}^{n}\rightarrow\mathbb{R}%
^{n}$ of class $C^{1}$ such that $\operatorname*{supp}\left(  r_{j}%
-\operatorname*{id}\right)  \subset\subset\mathbb{R}^{n}$, $r_{j}%
-\operatorname*{id}\rightarrow0$ in $C_{c}^{1}\left(  \mathbb{R}%
^{n};\mathbb{R}^{n}\right)  $, and $r_{j}\left(  \overline{\Omega}\right)
\subset\Omega$, where $\operatorname*{id}$ is the identity map (for the
existence of these retractions see, e.g., \cite[Proposition 1.2]%
{dalmaso-musina1989}). For $j$ large enough $r_{j}$ is invertible and
$r_{j}^{-1}-\operatorname*{id}\rightarrow0$ in $C_{c}^{1}\left(
\mathbb{R}^{n};\mathbb{R}^{n}\right)  $. It suffices to take $E_{j}%
:=r_{j}\left(  E\right)  $.

Let $u_{j}:=1-2\chi_{E_{j}}$. Since $\operatorname{P}_{\Phi}\left(
E_{j},\Omega\right)  =\operatorname{P}_{\Phi}\left(  E_{j}\right)  $, we have
$\mathcal{E}_{0}\left(  u_{j},\Omega\right)  =\operatorname{P}_{\Phi}\left(
E_{j}\right)  $. By \cite[Theorem 3.5(ii)]{bouchitte} for every $j$ there
exists a sequence $\left\{  u_{j}^{k}\right\}  $ converging to $u_{j}$ in
$L^{1}\left(  \Omega\right)  $ such that
\begin{equation}
\operatorname{P}_{\Phi}\left(  E_{j}\right)  =\mathcal{E}_{0}\left(
u_{j},\Omega\right)  =\limsup_{k\rightarrow\infty}\mathcal{E}_{\varepsilon_{k}%
}\left(  u_{j}^{k},\Omega\right)  \quad\text{and\quad}\int_{\Omega}u_{j}%
^{k}\left(  x\right)  \,dx=m\,. \label{bou 2}%
\end{equation}
Since $E_{j}\subset\subset\Omega$, the construction used in \cite[Theorem
3.5(ii)]{bouchitte} allows us to deduce that $u_{j}^{k}\in H^{1}\left(  \Omega\right)  $
and that it is possible to assume $u_{j}^{k}=1$ on $\partial\Omega$. Hence,
$\mathcal{E}_{\varepsilon_{k}}\left(  u_{j}^{k},\Omega\right)
=\mathcal{F}_{\varepsilon_{k}}\left(  u_{j}^{k}\right)  $, so that
(\ref{bou 2}) gives
\[
\operatorname{P}_{\Phi}\left(  E_{j}\right)  \geq\limsup_{k\rightarrow\infty
}\mathcal{F}_{\varepsilon_{k}}\left(  u_{j}^{k}\right)  \geq\mathcal{F}%
_{0}^{\prime\prime}\left(  u_{j}\right)  \,.
\]
Letting $j\rightarrow\infty$ and using the lower semicontinuity of
$\mathcal{F}_{0}^{\prime\prime}$ and the fact that $\operatorname{P}_{\Phi
}\left(  E_{j}\right)  \rightarrow\operatorname{P}_{\Phi}\left(  E\right)  $,
we obtain $\mathcal{F}_{0}\left(  u\right)  =\operatorname{P}_{\Phi}\left(
E\right)  \geq\mathcal{F}_{0}^{\prime\prime}\left(  u\right)  $, which shows
that $\mathcal{F}_{0}\geq\mathcal{F}_{0}^{\prime\prime}$.
\end{proof}

\section{The Liminf Inequality}

\label{section liminf}

By (\ref{functional F zero}) and (\ref{minimal perimeter value}), the $\Gamma
$-liminf inequality (\ref{second gamma liminf}) is a consequence of the
following theorem.

\begin{theorem}
\label{theorem gamma liminf} Let $\{u_{\varepsilon}\} $ be a sequence of
functions satisfying (\ref{constraint}) and converging strongly in $L^{1}(\Omega)$ to a
minimizer $u_{0}=1-2\chi_{E_{0}}$ of $\mathcal{F}_{0}$. Then
\begin{equation}
\liminf_{\varepsilon\rightarrow0+} \frac{\mathcal{E}_{\varepsilon}\left(
u_{\varepsilon}\right)  - n\kappa_{\Phi} c_{W}r^{n-1}}{\varepsilon} \geq0\,.
\label{Gamma liminf ineq}%
\end{equation}

\end{theorem}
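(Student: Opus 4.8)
The plan is to reduce the lower bound to a one–dimensional optimal–profile estimate localized near the interface $\partial^* E_0$, exploiting the crucial fact that $E_0 = B^{\Phi^\circ}_r(x_0)$ is a Wulff set and is strictly contained in $\Omega$. First, I would observe that the zero–order energy already forces $u_\varepsilon \to u_0$ with $\mathcal{E}_\varepsilon(u_\varepsilon)$ bounded (otherwise \eqref{Gamma liminf ineq} is trivial, passing to a subsequence that realizes the liminf as a limit). Then I would peel off the bulk contribution: since $u_0 = 1$ on $\partial\Omega$ and the transition region concentrates on $\partial^* E_0 \subset\subset \Omega$, the boundary condition plays no role at this order and one may work in a neighborhood of the smooth hypersurface $\partial B^{\Phi^\circ}_r(x_0)$.

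The core step is the standard ``bathtub''/coarea slicing argument from \cite{bouchitte,MM,Mod87,sternberg88}: using $\frac1\varepsilon W(u_\varepsilon) + \varepsilon \Phi^2(\nabla u_\varepsilon) \ge 2\sqrt{W(u_\varepsilon)}\,\Phi(\nabla u_\varepsilon) \ge 2\sqrt{W(u_\varepsilon)}\,|\nabla(\Phi^\circ\text{-composition})|$ type inequalities together with the chain rule and the generalized coarea formula, one gets
\begin{equation}
\mathcal{E}_\varepsilon(u_\varepsilon) \ge \int_{\mathbb{R}} \operatorname{P}_\Phi\bigl(\{u_\varepsilon < t\}\bigr)\, 2\sqrt{W(t)}\,\chi_{(-1,1)}(t)\, dt + (\text{remainder}) \ge c_W \operatorname{P}_\Phi(E_0),
\nonumber
\end{equation}
after using the isoperimetric inequality for $\operatorname{P}_\Phi$ (the Wulff theorem quoted in the preliminaries) to bound each slice $\operatorname{P}_\Phi(\{u_\varepsilon<t\}) \ge n\kappa_\Phi r^{n-1} = \operatorname{P}_\Phi(E_0)$, since the volume of each superlevel set converges to $|E_0| = V_m$ and the volume constraint is preserved. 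The point of \eqref{Gamma liminf ineq} — that the defect is $o(\varepsilon)$ rather than merely $O(\varepsilon)$ — requires keeping track of the slack in all three inequalities (Young's inequality, $\Phi(\nabla u)\ge \Phi^\circ$-gradient estimate via \eqref{polar diff}, and the isoperimetric deficit for each level set), and showing each slack integrates to something that, divided by $\varepsilon$, stays nonnegative in the liminf. Here the hypothesis $W(s)=|s-1|^\beta$ with $\beta<2$ near $s=1$ and the evenness of $W$ enter: they control the rate at which $u_\varepsilon$ can approach $\pm 1$ and, by symmetry, cancel the first-variation-type corrections around the optimal profile.

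The main obstacle I anticipate is precisely this last point — establishing that the difference quotient $(\mathcal{E}_\varepsilon(u_\varepsilon) - c_W\operatorname{P}_\Phi(E_0))/\varepsilon$ has nonnegative liminf rather than just that the numerator is nonnegative. The naive slicing gives the $\Gamma$-liminf at first order for free, but to get the second-order statement one must argue that any $O(\varepsilon)$ energy excess would be needed merely to ``afford'' a transition layer of width $\sim\varepsilon$, and that the optimal profile $z$ from \eqref{Cauchy problem} together with the minimality in \eqref{minimum cW} saturates this to leading order; the residual is genuinely second order and, crucially, has a sign. I expect one localizes near $\partial B^{\Phi^\circ}_r(x_0)$ using a tubular neighborhood adapted to the $\Phi^\circ$-geometry, applies the one–dimensional estimate \eqref{minimum cW} fiberwise (the fibers being $\Phi^\circ$-normal segments, along which \eqref{derivative polar} makes $\Phi(\nabla\cdot)$ reduce to an ordinary derivative), and then controls the curvature corrections of the hypersurface — these are $O(\varepsilon)$ but, after integrating the even profile against an odd curvature weight, cancel to this order. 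Assembling these pieces and verifying that the leftover terms are $o(\varepsilon)$ or nonnegative will be the delicate part of the argument.
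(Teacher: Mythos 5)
Your outline correctly isolates the real difficulty --- showing that the first-order defect is \emph{signed}, not merely that the numerator is nonnegative --- but the mechanism you propose for closing it does not work, and it is not the route the paper takes. The slicing/coarea argument controls the isoperimetric deficit of each level set $\{u_\varepsilon<t\}$ only through its volume, and these volumes differ from $V_m$ by quantities of order $\varepsilon$ (the width of the transition layer). Since $\operatorname{P}_\Phi$ of the optimal Wulff ball varies with volume at a nonzero rate, the resulting correction to $\operatorname{P}_\Phi(\{u_\varepsilon<t\})-n\kappa_\Phi r^{n-1}$ is itself of order $\varepsilon$ and has no sign: level sets with volume slightly below $V_m$ produce a negative contribution, and the scalar constraint $\int_\Omega u_\varepsilon\,dx=m$ fixes only a weighted average of these volumes, not the sign of the integrated first-order perimeter correction. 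Likewise, your ``fiberwise'' application of \eqref{minimum cW} in a tubular neighborhood leaves unaddressed the position of the transition layer relative to $\partial B^{\Phi^\circ}_r(x_0)$: if the layer sits at radius $r+\varepsilon\delta_\varepsilon$ with $\delta_\varepsilon<0$, the leading correction $(n-1)c_Wr^{n-2}\,\varepsilon\delta_\varepsilon$ is negative, and nothing in your argument rules this out. Moreover, for an arbitrary admissible sequence $\{u_\varepsilon\}$ you have no Euler--Lagrange equation to exploit, so the quantitative ``slack tracking'' you invoke is not actually available.

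The paper's proof is structured precisely to manufacture the missing signs. It first replaces $u_\varepsilon$ by a minimizer $\tilde u_\varepsilon$ of the energy with the mass constraint relaxed to the inequality $\int_\Omega u\,dx\le m$ (which can only decrease the energy), truncates to get $\tilde u_\varepsilon\le 1$, and applies the anisotropic P\'olya--Szeg\H{o}/convex symmetrization of \cite{AFTL} to reduce to a genuinely one-dimensional radial problem on $B_R^{\Phi^\circ}(0)$; the level sets are then Wulff balls by construction and no isoperimetric deficit needs to be tracked. The inequality constraint yields a Lagrange multiplier $\lambda_\varepsilon\ge0$ with $\lambda_\varepsilon\to(n-1)c_W$, and a delicate ODE analysis of the radial Euler--Lagrange equation --- where the hypothesis $1<\beta<2$ in \eqref{W beta} is used to show that the rescaled profile reaches $\pm1$ within distance $O(\varepsilon)$ of $\pm\tau_W$, with quantitative rates $\varepsilon^{1/\beta}$ and $\varepsilon^\alpha$, $\alpha>1$ --- proves that the zero $\delta_\varepsilon$ of the profile satisfies $\liminf_{\varepsilon\to0+}\delta_\varepsilon\ge0$. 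This is the signed substitute for your per-level-set deficit: the interface can only move outward, so the first-order term $\varepsilon\delta_\varepsilon(n-1)c_Wr^{n-2}$ is asymptotically nonnegative, while the curvature term vanishes by the odd/even cancellation you correctly anticipated. Without the relaxation to an inequality constraint, the symmetrization, and the ODE tail estimates, your plan has no mechanism to produce these signs.
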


\begin{proof} We begin by giving an outline of the proof. The first step is to replace 
$u_\varepsilon$ by a minimizer $\tilde{u}_\varepsilon$ 
 of an auxiliary energy where we relax the mass constraint in \eqref{constraint} 
 with an integral inequality.  The advantage in doing this is that we can use a truncation argument 
to prove that  $\tilde{u}_\varepsilon\le 1$ in $\Omega$. This allows us to use a convex symmetrization 
argument to reduce the energy by replacing $\tilde{u}_{\varepsilon}$ 
with a \textquotedblleft radial\textquotedblright\ 
 function $\hat{w}_\varepsilon$, i.e.,  a function of the form 
$\hat{w}_{\varepsilon}\left(  x\right)  =\overline{w}_{\varepsilon}\left(
\Phi^{\circ}\left(  x\right)  \right) $  defined on the ball $B^{\Phi^{\circ}}_{R}\!\!\left(  0 \right)$
 with the same volume as
$\Omega$. 

   To be precise, $\hat{w}_\varepsilon$ is defined as a ``radial'' minimizer of a problem  in $B^{\Phi^{\circ}}_{R}\!\!\left(  0\right)$
with a suitable  inequality constraint on the mass. 
The one-dimensional function $\overline{w}_{\varepsilon}$ satisfies an Euler--Lagrange equation with a Lagrange multiplier
$\lambda_\varepsilon$ such that $\varepsilon\lambda_\varepsilon\to 0$ as $\varepsilon\to 0+$. 
The choice of the inequality constraint allows us to prove that $\lambda_\varepsilon\ge 0$, which will be important in what follows. 
 
To estimate the energy of  $\overline{w}_{\varepsilon}$  it is convenient to consider the change of variables 
$\rho=r+\varepsilon t$, where $r$ is defined in \eqref{r}, and to introduce the function
$w_{\varepsilon}\left(  t\right)  :=\overline{w}_{\varepsilon}(r+\varepsilon
t)$ for $ -\frac{r}{\varepsilon}\le t\le \frac{R-r}{\varepsilon} $.  
Now the context of our problem has been reduced to a simpler one-dimensional  setting. Indeed, it turns out that   to prove \eqref{Gamma liminf ineq} it is enough to show that
\begin{equation}
\liminf_{\varepsilon\rightarrow0+} \frac{\mathcal{H}_{\varepsilon
}(w_{\varepsilon})- c_{W}r^{n-1}}{\varepsilon} \geq0\,,
\label{Gamma liminf ineq 2}%
\end{equation}
where the functional
\begin{equation}
\mathcal{H}_{\varepsilon}\left(  w\right)  :=\int_{-\frac{r}{\varepsilon}%
}^{\frac{R-r}{\varepsilon}}\left(  W\left(  w\left(  t\right)  \right)
+\left\vert w^{\prime}\left(  t\right)  \right\vert ^{2}\right)  \left(
r+\varepsilon t\right)  ^{n-1} dt \label{Gepsilon}%
\end{equation}
does not contain singular terms in $\varepsilon$. 

The proof of \eqref{Gamma liminf ineq 2} is based on several delicate estimates on $w_\varepsilon$. 
We first show that $w_\varepsilon$ vanishes at a point $\delta_\varepsilon$, with 
$\varepsilon\delta_\varepsilon\to 0$ as $\varepsilon\to 0+$, and introduce the shifted function  
$
\check{w}_{\varepsilon}\left(  t\right)  :=w_{\varepsilon}\left(  t+\delta
_{\varepsilon}\right)$. 
Then we prove that  $\check{w}_{\varepsilon}\rightarrow z$ in $H^{1}_{\text{loc}}$, 
 where $z$ is the  ``optimal  profile'' introduced in \eqref{Cauchy problem}, and that  $\lambda_\varepsilon\to (n-1)c_W$  as $\varepsilon\to 0+$. 
Next  we derive some technical 
estimates on $\check{w}_{\varepsilon}$ using arguments from the theory of ordinary differential equations, which rely on the fact that $\check{w}_{\varepsilon}(0)=0$.  These estimates allow us to show that $\liminf_\varepsilon \delta_{\varepsilon}\ge0$
and to finally prove   \eqref{Gamma liminf ineq 2}.

We divide the proof into a series of steps.

\medskip
\noindent
{\bf Step 1.} {\it Here we replace $u_\varepsilon$ by a minimizer $\tilde{u}_\varepsilon$ 
 of an auxiliary energy where we relax the mass constraint in \eqref{constraint} 
 with an integral inequality. }
 
 To be precise, we introduce the functional 
 $\tilde{\mathcal{F}}_\varepsilon$
defined by
\[
\tilde{\mathcal{F}}_{\varepsilon}\left(  u\right)  :=\mathcal{E}_{\varepsilon}\left(  u\right)
\]
if
\begin{equation}
u\in H^{1}\left(  \Omega\right)  \,,\quad\int_{\Omega}u\left(  x\right)
\,dx\leq m\,,\quad\text{and\quad}u=1\text{ on }\partial\Omega\,.
\label{constraint inequality}%
\end{equation}
We extend $\tilde{\mathcal{F}}_{\varepsilon}$ to $L^{1}\left(  \Omega\right)
$ by setting $\tilde{\mathcal{F}}_{\varepsilon}\left(  u\right)  :=+\infty$ if
(\ref{constraint inequality}) is not satisfied. Then, reasoning as in Theorem
\ref{theorem gamma convergence}, we can show that the $\Gamma$-limit of 
$\{\tilde{\mathcal{F}}_{\varepsilon}\}$ 
is the
functional $\tilde{\mathcal{F}}_{0}$ defined by%
\[
\tilde{\mathcal{F}}_{0}\left(  u\right)  :=c_{W}\operatorname{P}_{\Phi}\left(
E\right)
\]
if
\begin{equation}
u=1-2\chi_{E}\,,\quad E\subset\Omega\,,\quad\operatorname{P}\left(  E\right)
<+\infty\,,\,\text{ and}\quad\int_{\Omega}u\left(  x\right)  \,dx\leq m\,\,,
\label{constraint inequality 2}%
\end{equation}
while $\tilde{\mathcal{F}}_{0}\left(  u\right)  :=+\infty$ if
(\ref{constraint inequality 2}) is not satisfied.

Let $\tilde{u}_{\varepsilon}$ be a minimizer of $\tilde{\mathcal{F}%
}_{\varepsilon}$, whose existence can be justified by the Direct Method of the Calculus of Variations. Then
\begin{equation}
\tilde{\mathcal{F}}_{\varepsilon}\left(  \tilde{u}_{\varepsilon}\right)
\leq\mathcal{E}_{\varepsilon}\left(  u_{\varepsilon}\right)  \,.
\label{u tilde u}%
\end{equation}
Note that, by standard properties of $\Gamma$-convergence, we have that the sequence
\begin{equation}\label{F epsilon bounded}
\{\tilde{\mathcal{F}}_{\varepsilon}(\tilde{u}_\varepsilon)\}\quad\text{is bounded}
\end{equation}
and
 $\left\{  \tilde
{u}_{\varepsilon}\right\}  $ converges strongly in $L^{1}\left(  \Omega\right)  $ to
$u_{0}=1-2\chi_{E_{0}}$, where $E_{0}$ satisfies 
$\operatorname{P}_{\Phi}(E_0)\le \operatorname{P}_{\Phi}(E)$ for
every set $E\subset\Omega$ with finite perimeter and such that
\[
\frac{\left\vert \Omega\right\vert -m}{2}\leq\left\vert E\right\vert \,.
\]%

We claim that
\begin{equation}
\frac{\left\vert \Omega\right\vert -m}{2}=\left\vert E_{0}\right\vert \,.
\label{exact mass}%
\end{equation}
Since the ball $B_{r}^{\Phi^{\circ}}\!\!\left(  y_{0}\right) $ introduced in \eqref{exists} is contained in 
$\Omega$ and satisfies $\frac{\left\vert \Omega\right\vert -m}{2}=\left\vert B_{r}^{\Phi^{\circ}}\!\!\left(  y_{0}\right)\right\vert$, 
we have
\begin{equation}\label{per 0}
\operatorname{P}_{\Phi}\left(  E_{0}\right)\le \operatorname{P}_{\Phi}\left(  B_{r}^{\Phi^{\circ}}\!\!\left(  y_{0}\right)\right)\,.
\end{equation} 
Let $\rho\ge r$ be such that $\left\vert E_0\right\vert=\left\vert B_{\rho}^{\Phi^{\circ}}\!\!\left(  y_{0}\right)\right\vert$. Then for every $F\subset 
\mathbb{R}^n$ with finite perimeter and with $\left\vert F\right\vert=\left\vert E_0\right\vert$, by the minimality of the Wulff shape in $\mathbb{R}^n$ (see \cite{Fon}) it follows
\begin{equation}\label{per 1}
\operatorname{P}_{\Phi}\left(  B_{\rho}^{\Phi^{\circ}}\!\!\left( y_{0}\right)\right)\le \operatorname{P}_{\Phi}\left( F\right)\,.
\end{equation} 
From \eqref{perimeter rho}, \eqref{per 0}, and \eqref{per 1} we obtain $\operatorname{P}_{\Phi}\left(  E_{0}\right)\le \operatorname{P}_{\Phi}\left( F\right)$ for every $F\subset 
\mathbb{R}^n$ with finite perimeter and with $\left\vert F\right\vert=\left\vert E_0\right\vert$.  Since the Wulff sets are the unique minimizers of $\operatorname{P}_{\Phi}$ in  $\mathbb{R}^n$ under the  volume constraint  (see \cite{FonMu}), there exists $x_0\in\Omega$ such that
$E_0=B_{\rho}^{\Phi^{\circ}}\!\!\left(  x_0\right)$.  By \eqref{perimeter rho} and \eqref{per 0} it follows that $\rho=r$ and that \eqref{exact mass} holds.

Next we prove that $\tilde{u}_{\varepsilon}\leq1$ a.e.\ in $\Omega$. Let
$u_{1}:=\min\left\{  \tilde{u}_{\varepsilon},1\right\}  $. Assume, by
contradiction, that $\left\vert \{\tilde{u}_{\varepsilon}>1\}\right\vert >0$.
Since $W\left(  1 \right)  =0$ and $W\left(  s \right)  >0$ for $s>1$ by \eqref{W prime} and  \eqref{W beta}, we have
that $W\left(  u_{1}\left(  x\right)  \right)  \leq W\left(  \tilde
{u}_{\varepsilon}\left(  x\right)  \right)  $ for a.e.\ $x\in\Omega$, and the
inequality is strict for a.e.\ $x\in\{\tilde{u}_{\varepsilon}>1\}$. This
implies that
\[
\int_{\Omega}W\left(  u_{1}\left(  x\right)  \right)  \,dx< \int_{\Omega}
W\left(  \tilde{u}_{\varepsilon}\left(  x\right)  \right)  \,dx\,.
\]
Since $\nabla u_{1}=\nabla\tilde{u}_{\varepsilon}$ a.e. on $\{\tilde
{u}_{\varepsilon}\leq1\}$ and $\nabla u_{1}=0$ a.e. on $\{\tilde
{u}_{\varepsilon}> 1\}$, we have also $\Phi\left(  \nabla u_{1}\left(
x\right)  \right)  \leq\Phi\left(  \nabla\tilde{u}_{\varepsilon}\left(
x\right)  \right)  $ for a.e.\ $x\in\Omega$, which implies
\[
\int_{\Omega}\Phi^{2}\left(  \nabla u_{1}\left(  x\right)  \right)
\,dx\leq\int_{\Omega} \Phi^{2}\left(  \nabla\tilde{u}_{\varepsilon}\left(
x\right)  \right)  \,dx\,.
\]
Noting that $u_{1}$ satisfies (\ref{constraint inequality}), the previous
inequalities give $\tilde{\mathcal{F}}_{\varepsilon}\left(  u_{1}\right)  <
\tilde{\mathcal{F}}_{\varepsilon}\left(  \tilde{u}_{\varepsilon}\right)  $,
which contradicts the minimality of $\tilde{u}_{\varepsilon}$. This proves
that $\tilde{u}_{\varepsilon}\leq1$ a.e.\ in $\Omega$.

\medskip
\noindent {\bf Step 2.} {\it In this step we use 
\textquotedblleft convex\textquotedblright\ rearrangements
 to replace $\tilde{u}_{\varepsilon}$ by a \textquotedblleft radial\textquotedblright\ 
 function $\hat{w}_\varepsilon$, i.e., a function depending on $x$ only through $\Phi^\circ(x)$.
 }
 
Define $\tilde{v}_{\varepsilon}:=1-\tilde{u}_{\varepsilon}\in H_{0}^{1}\left(
\Omega\right)  $ and observe that
\begin{equation}
\mathcal{E}_{\varepsilon}\left(  \tilde{u}_{\varepsilon}\right)  =\int%
_{\Omega}\left(  \frac{1}{\varepsilon}W\left(  1-\tilde{v}_{\varepsilon
}(x)\right)  +\varepsilon\Phi^{2}\left(  \nabla\tilde{v}_{\varepsilon}\left(
x\right)  \right)  \right)  \,dx\,. 
\nonumber
\end{equation}
Since $\tilde{v}_{\varepsilon}\geq0$, we define the \textquotedblleft
convex\textquotedblright\ rearrangement $v_{\varepsilon}^{\star}$ of
$\tilde{v}_{\varepsilon}$ as the unique function of the form
\begin{equation}
v_{\varepsilon}^{\star}\left(  x\right)  =\overline{v}_{\varepsilon}\left(
\Phi^{\circ}\left(  x\right)  \right)  \,, \label{convex rearrangement}%
\end{equation}
with $\overline{v}_{\varepsilon}:\mathbb{R}^{+}\rightarrow\mathbb{R}^{+}$
nonincreasing and continuous from the right, and such that $\left\vert \{\tilde
{v}_{\varepsilon}>t\}\right\vert =\left\vert \{v_{\varepsilon}^{\star
}>t\}\right\vert $ for every $t>0$. It can be shown that 
\begin{equation}
\int_{B_{R}^{\Phi^{\circ}}\!\left(  0\right)  }W\left(  1-v_{\varepsilon
}^{\star}(x)\right)  \,dx=\int_{\Omega}W\left(  1-\tilde{v}_{\varepsilon
}(x)\right)  \,dx\,,\nonumber
\end{equation}
where $\left\vert B_{R}^{\Phi^{\circ}}\!\!\left(  0\right)  \right\vert
=\left\vert \Omega\right\vert $. The P\'{o}lya-Szeg\"{o} principle, which holds
also for \textquotedblleft convex\textquotedblright\ rearrangements (see
\cite[Theorem 3.1]{AFTL}), gives  $v_{\varepsilon}^{\star}\in H_{0}%
^{1}\left(  B_{R}^{\Phi^{\circ}}\!\!\left(  0\right)  \right)  $ and
\begin{equation}
\int_{B_{R}^{\Phi^{\circ}}\!\left(  0\right)  }\Phi^{2}\left(  \nabla
v_{\varepsilon}^{\star}(x)\right)  \,dx\leq\int_{\Omega}\Phi^{2}\left(
\nabla\tilde{v}_{\varepsilon}(x)\right)  \,dx\,.\nonumber
\end{equation}%
Therefore, we deduce that%
\begin{equation}
\mathcal{E}_{\varepsilon}(1-v_{\varepsilon}^{\star},B_{R}^{\Phi^{\circ}}\!\left(  0\right))
\leq \tilde{\mathcal{F}}%
_{\varepsilon}\left(  \tilde{u}_{\varepsilon}\right)  \,.
\label{spherical rearrangement}%
\end{equation}

Let $\hat{w}_{\varepsilon}$ be a minimizer of 
$\mathcal{E}_{\varepsilon}(\cdot,B_{R}^{\Phi^{\circ}}\!\left(  0\right))$
among all functions $u\in H^{1}\left(  B_{R}^{\Phi^{\circ}}\!\left(  0\right)  \right)$ 
satisfying
\begin{equation}
\nonumber
\int_{B_{R}^{\Phi^{\circ}}\!\left(  0\right)  }u\left(  x\right)
\,dx\leq m\quad\text{and\quad}u=1\text{ on }\partial B_{R}^{\Phi^{\circ}%
}\!\left(  0\right)  \,.
\end{equation}
By \eqref{spherical rearrangement} we have
\begin{equation}
 \mathcal{E}_{\varepsilon}(\hat{w}_{\varepsilon},B_{R}^{\Phi^{\circ}}\!\left(  0\right))
\leq 
\mathcal{E}_{\varepsilon}(1-v_{\varepsilon}^{\star},B_{R}^{\Phi^{\circ}}\!\left(  0\right))
\leq \tilde{\mathcal{F}}%
_{\varepsilon}\left(  \tilde{u}_{\varepsilon}\right)  \,.
\label{spherical rearrangement3}%
\end{equation}
 
 Reasoning as in Step 1, with $\tilde{\mathcal{F}}_{\varepsilon}$ replaced by
$\mathcal{E}_{\varepsilon}(\cdot,B_{R}^{\Phi^{\circ}}\!\left(  0\right))$, we may assume that
\begin{equation}
\hat{w}_{\varepsilon}\rightarrow1-2\chi_{B_{r}%
^{\Phi^{\circ}}\!\left(  0\right)  }\quad\text{in }L^{1}\left(  B_{R}%
^{\Phi^{\circ}}\!\left(  0\right)  \right)  \,, \label{convergence in L1}%
\end{equation}
where $r$ is given by (\ref{r}).

Using a symmetrization argument similar to the one above 
(see \eqref{convex rearrangement} and \eqref{spherical rearrangement}), we can assume that there exists a function $\overline{w}_{\varepsilon}:\mathbb{R}^{+}\rightarrow\mathbb{R}^{+}$,
nondecreasing and continuous from the right, such that
\begin{equation}
\hat{w}_{\varepsilon}\left(  x\right)  =\overline{w}_{\varepsilon}\left(
\Phi^{\circ}\left(  x\right)  \right)  \,. \label{convex rearrangement2}%
\end{equation}
By (\ref{derivative polar}) and (\ref{convex rearrangement}) we have
$\Phi\left(  \nabla \hat{w}_{\varepsilon}\left(  x\right)  \right)
=\overline{w}_{\varepsilon}^{\prime}\left(  \Phi^{\circ}\left(  x\right)
\right)  $ for a.e. $x\in B_{R}^{\Phi^{\circ}}\!\!\left(  0\right)  $, and so
(\ref{integration polar}) yields
\begin{equation}\label{polar coordinates}
\mathcal{E}_{\varepsilon}(\hat{w}_{\varepsilon},B_{R}^{\Phi^{\circ}}\!\left(  0\right))=n\kappa_{\Phi}\int%
_{0}^{R}\left(  \frac{1}{\varepsilon}W\left( \overline{w}_{\varepsilon
}(\rho)\right)  +\varepsilon\left\vert \overline{w}_{\varepsilon}^{\prime
}\left(  \rho\right)  \right\vert ^{2}\right)  \rho^{n-1}d\rho\,.
\end{equation}%
\color{black}%

\medskip 
\noindent {\bf Step 3. } {\it In this step we prove some elementary properties 
of the one-dimen\-sional function $\overline{w}_\varepsilon$ introduced 
in \eqref{convex rearrangement2}. }

By the minimality of $\hat{w}_{\varepsilon}$,  restricting our attention to functions of the form $w\circ\Phi^\circ$  
 and using \eqref{integration polar} and \eqref{derivative polar},  we deduce that $\overline{w}_{\varepsilon}$ is a minimizer of  
\begin{equation}
\mathcal{G}_{\varepsilon}\left(  w\right)  :=\int_{0}^{R}\left(  \frac
{1}{\varepsilon}W\left(  w(\rho)\right)  +\varepsilon\left\vert w^{\prime
}\left(  \rho\right)  \right\vert ^{2}\right)  \rho^{n-1}d\rho\,
\label{G epsilon} 
\end{equation}
among all functions $w\in H_{\operatorname{loc}}^{1}\left(  0,R\right)  $ subject
to the constraints
\begin{equation}
\int_{0}^{R}\left\vert w^{\prime}(\rho)\right\vert ^{2}\rho^{n-1}d\rho
<+\infty,\quad w\left(  R\right)  =1,\quad n\kappa_{\Phi}\int_{0}^{R}w\left(
\rho\right)  \rho^{n-1}d\rho\leq m\,. \label{radial constraint}%
\end{equation}

Note that if $w$ satisfies the first two conditions in (\ref{radial constraint}) then
\[
\left\vert w\left(  \rho\right)  \right\vert \leq1+\int_{\rho}^{R}\left|w^{\prime
}\left(  \sigma\right)\right|  d\sigma\,,
\]
hence
\[
\left\vert w\left(  \rho\right)  \rho^{n-1}\right\vert \leq\rho^{n-1}%
+\int_{\rho}^{R}\left|w^{\prime
}\left(  \sigma\right)\right|  \sigma^{n-1}d\sigma\,.
\]
By H\"{o}lder's inequality we obtain
\[
\left\vert w\left(  \rho\right)  \rho^{n-1}\right\vert \leq R^{n-1}+\left(
\frac{R^{n}}{n}\right)  ^{1/2}\left(  \int_{0}^{R}\left|w^{\prime
}\left(  \sigma\right)\right|^2 \sigma^{n-1}d\sigma\right)  ^{1/2}\,,
\]
hence
\begin{equation}
w(\rho)\rho^{n-1}\quad\text{is bounded in }(0,R)\,. \label{w rho n-1}%
\end{equation}
This implies that the integral in the last inequality in
(\ref{radial constraint}) is well defined. 

Reasoning by truncation as at the end of Step 1, we can prove
that 
\begin{equation}\label{less than one}
\overline{w}_{\varepsilon}\left(  \rho\right)  \leq1\quad\text {for  } 0<\rho<R  \,.
\end{equation}

 Using the equalities 
$\hat{w}_{\varepsilon}=\overline{w}_{\varepsilon}\circ
\Phi^{\circ}$ and 
$\chi_{B_{r}%
^{\Phi^{\circ}}\!\left(  0\right)  }=\chi_{[0,r)}\circ\Phi^\circ$, by \eqref{convergence in L1}
we obtain that 
\begin{equation}
\nonumber
\overline{w}_\varepsilon\rightarrow 1-2\chi_{[0,r)} \quad\text{in }L^{1}\left(0,R\right)
\,.
\end{equation}
Since each $\overline{w}_{\varepsilon}$ is nondecreasing, it follows that there is pointwise
convergence at every $\rho$, with the possible exceptions of $0$ and $r$. In particular, we have
\begin{equation}
\overline{w}_{\varepsilon}\left(  \rho\right)  \rightarrow\left\{
\begin{array}
[c]{ll}%
-1 & \text{if }0<\rho<r\,,\\
\phantom{-} 1 & \text{if }r<\rho\le R\,,
\end{array}
\right.  \label{convergence}%
\end{equation}
where the case $\rho=R$ can be obtained by \eqref{less than one}.
 In turn, for every $0<\rho_0<R$ the sequence 
\begin{equation}\label{w epsilon bounded}
\{\overline{w}_{\varepsilon}\}\quad\text{is bounded in }L^\infty([\rho_0,R])\,.
\end{equation}

\medskip
\noindent {\bf Step 4.} {\it Here we derive  the Euler--Lagrange
equation for $\overline{w}_{\varepsilon}$, and we prove that the corresponding Lagrange multipliers
 $\lambda_\varepsilon$ satisfy}
 \begin{equation}\label{epsilon lambda go zero}
\lim_{\varepsilon\to 0+}\varepsilon\lambda_\varepsilon= 0\,.
\end{equation}

We claim that $\overline{w}_{\varepsilon}\in C^{2}\left(  0,R\right)$ and satisfies the Euler--Lagrange
equation
\begin{equation}
-2\varepsilon\overline{w}_{\varepsilon}^{\prime\prime}\left(  \rho\right)
-\frac{2\left(  n-1\right)  \varepsilon}{\rho}\overline{w}_{\varepsilon
}^{\prime}\left(  \rho\right)  +\frac{1}{\varepsilon}W^{\prime}\left(
\overline{w}_{\varepsilon}(\rho)\right)  =-\lambda_{\varepsilon}\leq0
\label{radial Lagrange multiplier}%
\end{equation}
for some constant $\lambda_{\varepsilon}\geq0$. To see this, let
$\varphi\in C_{c}^{\infty}\left(  0,R\right)  $ be nonnegative. For $t>0$
the function $\overline{w}_{\varepsilon}-t\varphi$ fulfills
(\ref{radial constraint}), and so%
\[
\mathcal{G}_{\varepsilon}\left(  \overline{w}_{\varepsilon}\right)
\leq\mathcal{G}_{\varepsilon}\left(  \overline{w}_{\varepsilon
}-t\varphi\right)  \,.
\]
Thus the derivative of the function $t\mapsto\mathcal{G}_{\varepsilon
}\left(  \overline{w}_{\varepsilon}-t\varphi\right)  $ is greater than or equal
to $0$ at $t=0$. This gives%
\begin{equation} \label{less}
\int_{0}^{R}\left(  \frac{1}{\varepsilon}W^{\prime}\left(  \overline
{w}_{\varepsilon}(\rho)\right)  \varphi\left(  \rho\right)  +2\varepsilon
\,\overline{w}_{\varepsilon}^{\prime}\left(  \rho\right)  \varphi^{\prime
}\left(  \rho\right)  \right)  \rho^{n-1}\,d\rho\leq0
\end{equation}
for all nonnegative $\varphi\in C_{c}^{\infty}\left(  0,R\right)  $, which
shows that $-2\varepsilon\left(  \overline{w}_{\varepsilon}^{\prime}\left(
\rho\right)  \rho^{n-1}\right)  ^{\prime}+\frac{1}{\varepsilon}W^{\prime
}\left(  \overline{w}_{\varepsilon}(\rho)\right)  \rho^{n-1}$ is nonpositive in the sense of 
distributions. 

On the other hand, if we consider $\psi\in C_{c}^{\infty
}\left(  0,R\right)  $ such that $\int_{0}^{R}\psi\left(  \rho\right)
\rho^{n-1}d\rho=0$, then $\overline{w}_{\varepsilon}+t\psi$ satisfies
(\ref{radial constraint}) for all $t\in\mathbb{R}$, and so
\begin{equation}\label{equal}
\int_{0}^{R}\left(  \frac{1}{\varepsilon}W^{\prime}\left(  \overline
{w}_{\varepsilon}(\rho)\right)  \psi\left(  \rho\right)  +2\varepsilon
\,\overline{w}_{\varepsilon}^{\prime}\left(  \rho\right)  \psi^{\prime
}\left(  \rho\right)  \right)  \rho^{n-1}\,d\rho=0
\end{equation}
for all $\psi\in C_{c}^{\infty}\left(  0,R\right)  $ with $\int_{0}^{R}\psi\left(  \rho\right)  \rho^{n-1}\,d\rho=0$. 
We now use a classical argument (see, e.g., \cite[Lemma 7.3]{leoni}) to show that \eqref{equal} implies that there exists a  constant $\lambda_{\varepsilon}$ such that
\begin{equation}
-2\varepsilon\left(  \overline{w}_{\varepsilon}^{\prime}\left(  \rho\right)
\rho^{n-1}\right)  ^{\prime}+\frac{1}{\varepsilon}W^{\prime}\left(
\overline{w}_{\varepsilon}\left(  \rho\right)  \right)  \rho^{n-1}%
=-\lambda_{\varepsilon}\rho^{n-1}\label{Euler second form}%
\end{equation}
in the sense of distributions in $(0,R)$.  

Fix $\varphi_1\in C_{c}^{\infty}\left(  0,R\right)  $ with $\int_{0}%
^{R}\varphi_1\left(  \rho\right)  \rho^{n-1}\,d\rho=1$.
Given $\varphi\in C_{c}^{\infty
}\left(  0,R\right)  $, we can write $
\varphi=c_\varphi \varphi_1+\psi$, 
where $
c_\varphi :=\int_{0}^{R}\varphi\left(  \rho\right)
\rho^{n-1}\,d\rho$ 
and $\psi\in C_{c}^{\infty
}\left(  0,R\right) $ satisfies $\int_{0}^{R}\psi\left(  \rho\right)  \rho^{n-1}\,d\rho=0$.
 Hence, using \eqref{equal}, we obtain
\begin{equation*}
\int_{0}^{R}\left(  \frac{1}{\varepsilon}W^{\prime}\left(  \overline
{w}_{\varepsilon}(\rho)\right)  \varphi\left(  \rho\right)  +2\varepsilon
\,\overline{w}_{\varepsilon}^{\prime}\left(  \rho\right)  \varphi^{\prime
}\left(  \rho\right)  \right)  \rho^{n-1}\,d\rho=-\lambda_\varepsilon\int_{0}^{R}\varphi\left(  \rho\right)
\rho^{n-1}\,d\rho\,,
\end{equation*}
where
\begin{equation}\label{lambda epsilon}
\lambda_\varepsilon:=-
\int_{0}^{R}\left(  \frac{1}{\varepsilon}W^{\prime}\left(  \overline
{w}_{\varepsilon}(\rho)\right)  \varphi_1\left(  \rho\right)  +2\varepsilon
\,\overline{w}_{\varepsilon}^{\prime}\left(  \rho\right)  \varphi_1^{\prime
}\left(  \rho\right)  \right)  \rho^{n-1}\,d\rho\,.
\end{equation}
This concludes the proof of \eqref{Euler second form}. 

By \eqref{less} it follows that $\lambda_\varepsilon\ge 0$. Using the facts that $W$ is of class $C^1$ and  that $\overline{w}_\varepsilon$ is bounded on $[\rho_0,R]$ for every $0<\rho_0<R$ by \eqref{w rho n-1}, we deduce that $\overline{w}_{\varepsilon}\in C^{2}\left(  0,R\right)$ and that 
 (\ref{radial Lagrange multiplier}) and \eqref{Euler second form} are satisfied pointwise. 

Next we prove \eqref{epsilon lambda go zero}. By \eqref{lambda epsilon},
\begin{equation}
\nonumber
\varepsilon\lambda_\varepsilon=-
\int_{0}^{R}W^{\prime}\left(  \overline
{w}_{\varepsilon}(\rho)\right)  \varphi_1\left(  \rho\right)   \rho^{n-1}\,d\rho
-\int_{0}^{R} 2\varepsilon^2
\,\overline{w}_{\varepsilon}^{\prime}\left(  \rho\right)  \varphi_1^{\prime
}\left(  \rho\right)   \rho^{n-1}\,d\rho\,.
\end{equation}
Since $\varphi_1$ has compact support in $(0,R)$, the first integral tends to zero 
 in view of \eqref{W even}, 
 \eqref{W beta}, \eqref{convergence}, and \eqref{w epsilon bounded}, 
while the second integral goes to zero since the sequence $\{\varepsilon |\overline{w}_\varepsilon^\prime|^2 \rho^{n-1}\}$ is bounded 
in $L^1(0,R)$ by \eqref{F epsilon bounded}, \eqref{spherical rearrangement3}, and \eqref{polar coordinates}.  This concludes the proof of \eqref{epsilon lambda go zero}.

\medskip
\noindent {\bf Step 5.} {\it Here we prove that for $\varepsilon>0$ small enough
\begin{equation}
\inf\overline{w}_{\varepsilon}\geq-1-\left(  \frac{\varepsilon\lambda
_{\varepsilon}}{\beta}\right)  ^{1/\left(  \beta-1\right)  }\,,
\label{inf w pound}%
\end{equation}
 where $1<\beta<2$ is the constant in \eqref{W beta}.}

Integrating (\ref{Euler second form}) gives 
\begin{align}
&  \overline{w}^{\prime}_{\varepsilon}\left(  \rho_{2} \right)  \rho_{2}^{n-1}
- \overline{w}^{\prime}_{\varepsilon}\left(  \rho_{1} \right)  \rho_{1}%
^{n-1}\nonumber\\
&  = \frac{1}{2\varepsilon} \int_{\rho_{1}}^{\rho_{2}}
\left(  \frac{1}{\varepsilon} W^{\prime}\left(  \overline{w}_{\varepsilon}\left(  \rho\right)
\right)  \rho^{n-1} + \lambda_{\varepsilon}\rho^{n-1} \right)  d\rho
\label{integrated Euler}%
\end{align}
for $0<\rho_{1}\leq\rho_{2}\leq R$. 
Since $\overline{w}_{\varepsilon}\left(  \rho\right)\le 1$ for every $0<\rho\le R$ by \eqref{less than one}, 
and $W^{\prime}\left(  s \right) <0$
for $s<-1$ by \eqref{W even} and \eqref{W prime}, the integral
\[
\int_{0}^{R}  \left(  \frac{1}{\varepsilon}W^{\prime}\left(  \overline
{w}_{\varepsilon}\left(  \rho\right)  \right)  \rho^{n-1} + \lambda
_{\varepsilon}\rho^{n-1} \right)  d\rho
\]
is well-defined as an element of $\mathbb{R}\cup\left\{  -\infty\right\}  $.
We claim that
\begin{equation}
\lim_{\rho\rightarrow0+} \overline{w}^{\prime}_{\varepsilon}\left(
\rho\right)  \rho^{n-1}= 0\,. \label{w prime rho n-1}%
\end{equation}
First, we observe that the limit 
exists in $\mathbb{R}\cup\left\{  +\infty\right\}  $ by \eqref{integrated Euler}. If it were different from zero, then
there would exist $c_0>0$ and $\rho_{0}>0$ such that $\left\vert \overline{w}^{\prime
}_{\varepsilon}\left(  \rho\right)  \right\vert \geq c_0
/ \rho^{n-1}  $ for $0<\rho<\rho_{0}$. It would follow that
\[
\int_{0}^{R}\left\vert w^{\prime}(\rho) \right\vert ^{2} \rho^{n-1}d\rho
\geq c_0^2\int_{0}^{\rho_{0}} \frac{d\rho
}{\rho^{n-1}}=+\infty\,,
\]
which would contradict the first inequality in (\ref{radial constraint}) since $n\ge 2$. This gives  \eqref{w prime rho n-1}.

To prove \eqref{inf w pound} we first show that 
\begin{equation}\label{lim inf -1}
 \liminf_{\varepsilon\rightarrow0+}\, \inf\overline{w}_{\varepsilon}\geq-1\,.
\end{equation}
It is enough to prove an estimate from below for those $\varepsilon$ such that $\inf\overline{w}_{\varepsilon}<-1$. We claim that for those $\varepsilon$,
\begin{equation}
\limsup_{\rho\rightarrow0+} \left(  W^{\prime}\left(  \overline{w}_{\varepsilon
}\left(  \rho\right)  \right)  + \varepsilon\lambda_{\varepsilon} \right)
\geq0\,. \label{W prime epsilon lambda}%
\end{equation}
If not, (\ref{integrated Euler}) and (\ref{w prime rho n-1}) imply that
$\overline{w}^{\prime}_{\varepsilon}\left(  \rho\right)  \rho^{n-1}<0$ for
$\rho>0$ small enough, and this violates the fact that $\overline
{w}_{\varepsilon}$ is nondecreasing. This proves (\ref{W prime epsilon lambda}%
), which implies that, for $\inf\overline{w}_{\varepsilon}<-1$, we have
\begin{equation}
W^{\prime}\left(  \inf\overline{w}_{\varepsilon} \right)  + \varepsilon
\lambda_{\varepsilon} \geq0\,, \label{W prime leq}%
\end{equation}
 where 
$W^\prime(-\infty)$ denotes the limsup of $W^\prime(s)$ as $s\to -\infty$.

 Since $W^{\prime}(s)<0$ for
every $-\infty\le s<-1$, by \eqref{W even}, \eqref{W prime},  and  \eqref{lim W prime},   inequality  \eqref{lim inf -1} follows from  \eqref{epsilon lambda go zero}. 

By (\ref{W even}), (\ref{W beta}), \eqref{lim inf -1},   and (\ref{W prime leq}), if 
$\varepsilon>0$ is small enough and
$\inf
\overline{w}_{\varepsilon}<-1$ we have
\[
\beta\left(  -\inf\overline{w}_{\varepsilon} -1\right)  ^{\beta-1}%
\leq\varepsilon\lambda_{\varepsilon}\,.
\]
 This proves \eqref{inf w pound}.

\medskip
\noindent {\bf Step 6.} {\it In this step we consider the change of variables 
$\rho=r+\varepsilon t$, we define
$w_{\varepsilon}\left(  t\right)  :=\overline{w}_{\varepsilon}(r+\varepsilon
t)$ for $  -\frac{r}{\varepsilon}\le t\le \frac{R-r}{\varepsilon} $,
and we derive the one-dimensional formulation \eqref{Gamma liminf ineq 2} of the problem in terms of the new energy $\mathcal{H}_\varepsilon$  introduced in \eqref{Gepsilon}.}

Note that $w_{\varepsilon}\left(  \frac{R-r}{\varepsilon}\right)  =1$ and
$w_{\varepsilon}$ is nondecreasing.  By (\ref{inf w pound}), for $\varepsilon>0$ small enough we have 
\begin{equation}
\inf w_{\varepsilon}\geq-1-\left(  \frac{\varepsilon\lambda_{\varepsilon}%
}{\beta}\right)  ^{1/\left(  \beta-1\right)  }\,. 
\nonumber
\end{equation}
In particular, using also \eqref{less than one} and \eqref{epsilon lambda go zero}, for all $\varepsilon>0$ sufficiently small we get
\begin{equation}\label{bound w epsilon}
 -2\le w_{\varepsilon}(t)\le 1
\end{equation}
for all $  -\frac{r}{\varepsilon}\le t\le \frac{R-r}{\varepsilon}$.

Moreover, by \eqref{radial Lagrange multiplier}, $w_{\varepsilon}$ satisfies the Euler--Lagrange equation%
\begin{equation}
-2w_{\varepsilon}^{\prime\prime}\left(  t\right)  -2\left(  n-1 \right)
\varepsilon\frac{w_{\varepsilon}^{\prime}\left(  t\right)  }{r+\varepsilon
t}+W^{\prime}\left(  w_{\varepsilon}\left(  t\right)  \right)  =-\varepsilon
\lambda_{\varepsilon}\leq0\,, \label{e-l}%
\end{equation}
and by (\ref{radial constraint}),
\begin{equation}
\int_{-\frac{r}{\varepsilon}}^{\frac{R-r}{\varepsilon}}w_{\varepsilon}\left(
t\right)  \left(  r+\varepsilon t\right)  ^{n-1}dt\leq\frac{m}{n\kappa_{\Phi
}\varepsilon}\,. \label{volume constraint epsilon}%
\end{equation}

Observe that, setting
\begin{equation}
w_{0}\left(  t\right)  :=\left\{
\begin{array}
[c]{ll}%
-1 & \text{if }t<0\,,
\\\phantom{-} 1 & \text{if }t>0\,,
\end{array}
\right.
\label{w0} 
\end{equation}
we have
\begin{align*}
\int_{-\frac{r}{\varepsilon}}^{\frac{R-r}{\varepsilon}}&w_{0}\left(  t\right)
\left(  r+\varepsilon t\right)  ^{n-1}dt    =\int_{0}^{\frac{R-r}%
{\varepsilon}}\left(  r+\varepsilon t\right)  ^{n-1}dt-\int_{-\frac
{r}{\varepsilon}}^{0}\left(  r+\varepsilon t\right)  ^{n-1}dt\\
&  =\frac{1}{n\varepsilon}\left[  \left(  r+\varepsilon t\right)  ^{n}\right]
_{0}^{\frac{R-r}{\varepsilon}}-\frac{1}{n\varepsilon}\left[  \left(
r+\varepsilon t\right)  ^{n}\right]  _{-\frac{r}{\varepsilon}}^{0}  =\frac{1}{n\varepsilon}\left(  R^{n}-2r^{n}\right)  =\frac{m}{n\kappa
_{\Phi}\varepsilon}\,,
\end{align*}
where the last equality follows from  
 (\ref{constraint reached}), taking into account the fact that $\kappa
_{\Phi} R^n=\left\vert
B^{\Phi^{\circ}}_{R}\!\!\left(  0 \right)  \right\vert =\left\vert
\Omega\right\vert $. Thus, (\ref{volume constraint epsilon}) is equivalent to%
\begin{equation}
\int_{-\frac{r}{\varepsilon}}^{\frac{R-r}{\varepsilon}}\left(  w_{\varepsilon
}\left(  t\right)  -w_{0}\left(  t\right)  \right)  \left(  r+\varepsilon
t\right)  ^{n-1}dt\leq0\,. \label{rescaled volume constraint}%
\end{equation}

The minimality of $\overline{w}_\varepsilon$ for $\mathcal{G}_\varepsilon$ and a change of variables show that $w_{\varepsilon}$ is a minimizer of the functional $\mathcal{H}_{\varepsilon}$ 
defined in \eqref{Gepsilon} over all
$w\in H^1_{\text{loc}}\left(-\frac{r}{\varepsilon},\frac{R-r}{\varepsilon}\right)\cap H^1\left(0,\frac{R-r}{\varepsilon}\right)$ satisfying $w\left(  \frac{R-r}{\varepsilon}\right)  =1$ and%
\begin{equation}
\int_{-\frac{r}{\varepsilon}}^{\frac{R-r}{\varepsilon}}\left(  w\left(
t\right)  -w_{0}\left(  t\right)  \right)  \left(  r+\varepsilon t\right)
^{n-1} dt\leq0\,. \label{constraint less zero}%
\end{equation}

By (\ref{u tilde u}), \eqref{spherical rearrangement},  \eqref{spherical rearrangement3},  (\ref{polar coordinates}), and \eqref{G epsilon} we have%
\begin{equation*}
  \mathcal{H}_{\varepsilon}\left(  w_{\varepsilon}\right)
  =\mathcal{G}_{\varepsilon}\left(  \overline{w}_{\varepsilon}\right)
=\frac{1}{n\kappa_{\Phi}}\mathcal{E}_{\varepsilon}(\hat{w}_{\varepsilon},B_{R}^{\Phi^{\circ}}\!\left(  0\right))
\leq\frac{1}{n\kappa_{\Phi}} \tilde{\mathcal{F}}_{\varepsilon}\left(
\tilde{u}_{\varepsilon}\right)  \leq\frac{1}{n\kappa_{\Phi}} \mathcal{E}%
_{\varepsilon}\left(  u_{\varepsilon}\right)  \,. 
\end{equation*}
Therefore, in order to prove (\ref{Gamma liminf ineq}) it is enough
to show that \eqref{Gamma liminf ineq 2} holds. 

\medskip
\noindent {\bf Step 7.} {\it Here we prove that the function $w_{\varepsilon}$
obtained in the previous step vanishes at a point $\delta_{\varepsilon}$, 
and that}
\begin{equation}
\lim_{\varepsilon\rightarrow0^{+}}\varepsilon\delta_{\varepsilon}=0\,.
\label{epsilon delta epsilon go to zero}%
\end{equation}

Let $z$ be the function defined by (\ref{Cauchy problem}). We claim that $w=z$ satisfies (\ref{constraint less zero}) for all $\varepsilon>0$ sufficiently small.
 Since $z(t)=w_0(t)$ for $|t|\ge \tau_W$, (\ref{constraint less zero}) reduces to
 \begin{equation*}
\int_{-\tau_W}^{\tau_W}\left(  z\left(
t\right)  -w_{0}\left(  t\right)  \right)  \left(  r+\varepsilon t\right)
^{n-1} dt\leq0 
\end{equation*}
for all $\varepsilon>0$ sufficiently small, or, equivalently,
\begin{equation*}
\int_{0}^{\tau_W}\left(  z\left(
t\right)  -w_{0}\left(  t\right)  \right)  \left(  r+\varepsilon t\right)
^{n-1} dt\le 
-\int_{-\tau_W}^{0}\left(  z\left(
t\right)  -w_{0}\left(  t\right)  \right)  \left(  r+\varepsilon t\right)
^{n-1} dt \,.
\end{equation*}
Using the fact that $z-w_0$ is odd, a change of variables on the right-hand side
leads to
\begin{equation*}
\int_{0}^{\tau_W}\left(  z\left(
t\right)  -w_{0}\left(  t\right)  \right)  \left(  r+\varepsilon t\right)
^{n-1} dt\le 
\int_{0}^{\tau_W}\left(  z\left(
t\right)  -w_{0}\left(  t\right)  \right)  \left(  r-\varepsilon t\right)
^{n-1} dt \,,
\end{equation*}
which   follows from the fact that $z\left(  t\right)  -w_{0}\left(
t\right)  \leq0$ for all $ 0\le t\le \tau_W$.
Hence, $z$ satisfies (\ref{constraint less zero}) for all $\varepsilon>0$ sufficiently small and so, by the minimality of $w_\varepsilon$, we have  $\mathcal{H}_{\varepsilon}\left(  w_\varepsilon\right)\le \mathcal{H}_{\varepsilon}\left(  z\right)$.  

Moreover, by \eqref{constant cW},%
\begin{align*}
\mathcal{H}_{\varepsilon}\left(  z\right)   &  =\int_{-\tau_{W}}^{\tau_{W}%
}\left(  W\left(  z\left(  t\right)  \right)  +\left\vert z^{\prime}\left(
t\right)  \right\vert ^{2}\right)  \left(  r+\varepsilon t\right)  ^{n-1}dt\\
&  =\int_{-\tau_{W}}^{\tau_{W}}\left(  W\left(  z\left(  t\right)  \right)
+\left\vert z^{\prime}\left(  t\right)  \right\vert ^{2}\right)
r^{n-1}\,dt+O\left(  \varepsilon^{2}\right)  = c_{W}r^{n-1}+O\left(
\varepsilon^{2}\right)  \,,
\end{align*}
where we have used the fact
\[
\int_{-\tau_{W}}^{\tau_{W}}\left(  W\left(  z\left(  t\right)  \right)
+\left\vert z^{\prime}\left(  t\right)  \right\vert ^{2}\right)  t\,dt=0\,,
\]
since $W$ is even and $z$ is odd. It
follows that
\begin{equation}
\mathcal{H}_{\varepsilon}\left(  w_{\varepsilon}\right)  \leq\mathcal{H}%
_{\varepsilon}\left(  z\right)  =c_{W}r^{n-1}+O\left(  \varepsilon^{2}\right)
\,. \label{bound G epsilon}%
\end{equation}

Fix $0<r_{1}<r<r_{2}<R$. Then $\overline{w}_{\varepsilon}(r_{1})\rightarrow-1$
and $\overline{w}_{\varepsilon}(r_{2})\rightarrow1$ by (\ref{convergence}),
and so $w_{\varepsilon}\left(  \frac{r_{1}-r}{\varepsilon}\right)
\rightarrow-1$ and $w_{\varepsilon}\left(  \frac{r_{2}-r}{\varepsilon}\right)
\rightarrow1$ as $\varepsilon\rightarrow0^{+}$. Since $w_{\varepsilon}$ is
continuous,\ for all $\varepsilon$ sufficiently small there
exists $\delta_{\varepsilon}\in\left(  \frac{r_{1}-r}{\varepsilon},\frac
{r_{2}-r}{\varepsilon}\right)  $ such that
\[
w_{\varepsilon}\left(  \delta_{\varepsilon}\right)  =0\,.
\]
Then $r_{1}-r\leq\varepsilon\delta_{\varepsilon}\leq r_{2}-r$ all
$\varepsilon$ sufficiently small. Hence,
\[
r_{1}-r\leq\liminf_{\varepsilon\rightarrow0^{+}}\varepsilon\delta
_{\varepsilon}\leq\limsup_{\varepsilon\rightarrow0^{+}}\varepsilon
\delta_{\varepsilon}\leq r_{2}-r\,.
\]
Letting $r_{1}\rightarrow r^{-}$ and $r_{2}\rightarrow r^{+}$, we conclude  that
\eqref{epsilon delta epsilon go to zero} holds.

\medskip
\noindent {\bf Step 8.} {\it 
Define
\begin{equation}
\check{w}_{\varepsilon}\left(  t\right)  :=w_{\varepsilon}\left(  t+\delta
_{\varepsilon}\right)  \quad\text{for } -\tfrac{r}{\varepsilon} -
\delta_{\varepsilon}\le t\le\tfrac{R-r}{\varepsilon} - \delta_{\varepsilon}
\,.
\label{w check} 
\end{equation}
In this step we prove that 
\begin{equation}
\label{hat w epsilon to z}\check{w}_{\varepsilon}\rightarrow z \quad
\hbox{strongly in}\quad H^{1}\left(  -b,b\right)
\end{equation}
for every $b>0$, where $z$ is the ``optimal profile'' introduced in \eqref{Cauchy problem}.
 }

Fix  $0<r_{1}<r$, $r_{2}<R-r$, and $b \geq\tau_{W}$, where $\tau_{W}$ is given in \eqref{tau W}. By
(\ref{epsilon delta epsilon go to zero}), for all $\varepsilon>0$ sufficiently
small so that $\left\vert \delta_{\varepsilon}\right\vert <\min\left\{  \frac{r-r_{1}%
}{\varepsilon},\frac{R-r-r_{2}}{\varepsilon}\right\}  $,  by \eqref{G epsilon},  \eqref{epsilon delta epsilon go to zero},   and 
(\ref{bound G epsilon}) we obtain
\begin{align}
c_{W}r^{n-1}  &  +O\left(  \varepsilon^{2}\right)  \geq\mathcal{H}%
_{\varepsilon}\left(  w_{\varepsilon}\right) \nonumber\\
&  =\int_{-\frac{r}{\varepsilon} - \delta_{\varepsilon}}^{\frac{R-r}%
{\varepsilon} - \delta_{\varepsilon}}\left(  W\left(  \check{w}_{\varepsilon
}\left(  s\right)  \right)  +\left\vert \check{w}_{\varepsilon}^{\prime}\left(
s\right)  \right\vert ^{2}\right)  \left(  r+\varepsilon s + \varepsilon
\delta_{\varepsilon}\right)  ^{n-1} ds\nonumber\\
&  \geq\int_{-\frac{r_{1}}{\varepsilon}}^{\frac{r_{2}}{\varepsilon}}\left(
W\left(  \check{w}_{\varepsilon}\left(  s\right)  \right)  +\left\vert \check
{w}_{\varepsilon}^{\prime}\left(  s\right)  \right\vert ^{2}\right)  \left(
r+\varepsilon s + \varepsilon\delta_{\varepsilon}\right)  ^{n-1}
ds\label{estimate 5}\\
&  \geq\int_{-b}^{b}\left(  W\left(  \check{w}_{\varepsilon}\left(  s\right)
\right)  +\left\vert \check{w}_{\varepsilon}^{\prime}\left(  s\right)
\right\vert ^{2}\right)  \left(  r+\varepsilon s + \varepsilon\delta
_{\varepsilon}\right)  ^{n-1} ds\,.\nonumber
\end{align}
Fix $0<\eta<r$. Again by (\ref{epsilon delta epsilon go to zero}),
$\left\vert \varepsilon s-\varepsilon\delta_{\varepsilon}\right\vert \leq\eta$ for
all $\varepsilon$ sufficiently small, and so by \eqref{estimate 5},
\[
c_{W}r^{n-1}+\eta\geq\left(  r-\eta\right)  ^{n-1} \int_{-b}^{b}\left(
W\left(  \check{w}_{\varepsilon}\left(  s\right)  \right)  +\left\vert \check
{w}_{\varepsilon}^{\prime}\left(  s\right)  \right\vert ^{2}\right)  \,ds\,.
\]
Since $\check{w}_{\varepsilon}\left(  0\right)  =0$, it follows that for all
$\varepsilon$ sufficiently small the sequence $\left\{  \check{w}_{\varepsilon
}\right\}  $ is bounded in $H^{1}\left(  -b,b\right)  $, and thus, up to a
subsequence not relabeled, it converges weakly in $H^{1}\left(  -b,b\right)  $ and uniformly to
some function $\check{w}\in H^{1}\left(  -b,b\right)  $. It follows by Fatou's
Lemma and the weak sequential lower semicontinuity of the $L^{2}$ norm that%
\begin{align}
c_{W}r^{n-1}+\eta &  \geq\left(  r-\eta\right)  ^{n-1} \limsup_{\varepsilon
\rightarrow0^{+}}\int_{-b}^{b}\left(  W\left(  \check{w}_{\varepsilon}\left(
s\right)  \right)  +\left\vert \check{w}_{\varepsilon}^{\prime}\left(  s\right)
\right\vert ^{2}\right)  \,ds\nonumber\\
&  \geq\left(  r-\eta\right)  ^{n-1} \liminf_{\varepsilon\rightarrow0^{+}}%
\int_{-b}^{b}\left(  W\left(  \check{w}_{\varepsilon}\left(  s\right)  \right)
+\left\vert \check{w}_{\varepsilon}^{\prime}\left(  s\right)  \right\vert
^{2}\right)  \,ds\label{estimate 6}\\
&  \geq\left(  r-\eta\right)  ^{n-1} \int_{-b}^{b}\left(  W\left(  \check{w}\left(  s\right)  \right)  +\left\vert \check{w}^{\prime}\left(  s\right)
\right\vert ^{2}\right)  \,ds\,.\nonumber
\end{align}
Letting $\eta\rightarrow0^{+}$ gives%
\begin{align*}
c_{W}  &  \geq\int_{-b}^{b}\left(  W\left(  \check{w}\left(  s\right)  \right)
+\left\vert \check{w}^{\prime}\left(  s\right)  \right\vert ^{2}\right)  \,ds\\
&  \geq\min_{w\in H^{1}\left(  -b,b\right)  \,, \,w\left(  0\right)  =0}\int%
_{-b}^{b}\left(  W\left(  w\left(  s\right)  \right)  +\left\vert w^{\prime
}\left(  s\right)  \right\vert ^{2}\right)  \,ds=c_{W}\,,
\end{align*}
where we have used \eqref{minimum cW}. 
Since $\check{w}\left(  0\right)  =0$, from the uniqueness of the minimizer it follows that $\check{w}=z$. Hence, the
entire sequence $\left\{  \check{w}_{\varepsilon}\right\}  $ weakly converges to
$z$ in $H^{1}\left(  -b,b\right)  $, and by (\ref{estimate 6}),
\[
c_{W}=\lim_{\varepsilon\rightarrow0^{+}}\int_{-b}^{b}\left(  W\left(  \check{w}_{\varepsilon}\left(  s\right)  \right)  +\left\vert \check{w}_{\varepsilon
}^{\prime}\left(  s\right)  \right\vert ^{2}\right)  \,ds=\int_{-b}^{b}\left(
W\left(  z\left(  s\right)  \right)  +\left\vert z^{\prime}\left(  s\right)
\right\vert ^{2}\right)  \,ds\,,
\]
which implies that
\[
\lim_{\varepsilon\rightarrow0^{+}}\int_{-b}^{b}\left\vert \check{w}%
_{\varepsilon}^{\prime}\left(  s\right)  \right\vert ^{2}\,ds=\int_{-b}%
^{b}\left\vert z^{\prime}\left(  s\right)  \right\vert ^{2}\,ds\,,
\]
and, in turn, \eqref{hat w epsilon to z} holds.

\medskip
\noindent {\bf Step 9.} {\it Here we prove that the sequence of Lagrange multipliers $\lambda_\varepsilon$ found in Step 4 converges to 
$\lambda_0:=(n-1)c_W$, where $c_W$ is defined in \eqref{constant cW}.} 

Note that a similar result was obtained in \cite{luckhaus-modica} in the case of a mass equality constraint. 

Since $\check{w}_{\varepsilon}\left(  -\tau_W\right)
\rightarrow-1$ and $\check{w}_{\varepsilon}\left(  {\tau_W}\right)  \rightarrow1^{-}$ by \eqref{hat w epsilon to z}, there exists a sequence $\left\{  \zeta_{\varepsilon}\right\}  $ of
positive numbers converging to $0$ such that $1+\check{w}_{\varepsilon}\left(
-{\tau_W}\right)  \leq\zeta_{\varepsilon}$ and $1-\check{w}_{\varepsilon}\left(
{\tau_W}\right)  \leq\zeta_{\varepsilon}$ for every $\varepsilon>0$. 
Then  for $\varepsilon>0$ small enough

\begin{align*}
\int_{-{\tau_W}}^{{\tau_W}}&\left(
W\left(  \check{w}_{\varepsilon}\left(  s\right)  \right)  +\left\vert \check
{w}_{\varepsilon}^{\prime}\left(  s\right)  \right\vert ^{2}\right)  \left(
r+\varepsilon s + \varepsilon\delta_{\varepsilon}\right)  ^{n-1} ds\\
&  \geq2\left(  r-\varepsilon {\tau_W} + \varepsilon\delta_{\varepsilon}\right)
^{n-1} \int_{-{\tau_W}}^{{\tau_W}}\left(  \sqrt{W\left(  \check{w}_{\varepsilon}\left(
s\right)  \right)  }\check{w}_{\varepsilon}^{\prime}\left(  s\right)  \right)
ds\\
&  =2\left(  r-\varepsilon {\tau_W} + \varepsilon\delta_{\varepsilon}\right)  ^{n-1}
\int_{\check{w}_{\varepsilon}\left(  -{\tau_W}\right)  }^{\check{w}_{\varepsilon}\left(
{\tau_W}\right)  }\sqrt{W\left(  \sigma\right)  }\,d\sigma\\
&  \geq2\left(  r-\varepsilon {\tau_W} + \varepsilon\delta_{\varepsilon}\right)
^{n-1} \int_{-1}^{1}\sqrt{W\left(  \sigma\right)  }\,d\sigma\\
&  -3r^{n-1} \int_{1- \zeta_{\varepsilon}}^{1}\sqrt{W\left(  \sigma\right)
}\,d\sigma-3r^{n-1} \int_{-1}^{-1+\zeta_{\varepsilon} }\sqrt{W\left(
\sigma\right)  }\,d\sigma\,.
\end{align*}
By (\ref{W beta})  we have
\[
\int_{1-\zeta_{\varepsilon}}^{1}\sqrt{W\left(  \sigma\right)  }\,d\sigma
=\int_{1-\zeta_{\varepsilon}}^{1}\left(  1-\sigma\right)  ^{\beta/2}%
\,d\sigma=\int_{0}^{\zeta_{\varepsilon}}s^{\beta/2}\,ds=\frac{2}{2+\beta}%
\zeta_{\varepsilon}^{(2+\beta)/2}%
\]
and, similarly,%
\[
\int_{-1}^{-1+\zeta_{\varepsilon} }\sqrt{W\left(  \sigma\right)  }%
\,d\sigma=\frac{2}{2+\beta}\zeta_{\varepsilon} ^{(2+\beta)/2}\,.
\]
In conclusion,%
\begin{align*}
     \int_{-\tau_W}^{\tau_W}&\left(
W\left(  \check{w}_{\varepsilon}\left(  s\right)  \right)  +\left\vert \check
{w}_{\varepsilon}^{\prime}\left(  s\right)  \right\vert ^{2}\right)  \left(
r+\varepsilon s + \varepsilon\delta_{\varepsilon}\right)  ^{n-1} ds\\
&  \geq c_{W}r^{n-1} -O\left(  \varepsilon \tau_W - \varepsilon\delta_{\varepsilon
}\right)  -\frac{12 r^{n-1}}{2+\beta} \zeta_{\varepsilon}^{(2+\beta)/2}\,.
\end{align*}
Fix $0<r_1<r<r_2<R$. Using (\ref{estimate 5}) and the previous inequality, we find that
\begin{align}
\int_{\tau_W}^{\frac{r_{2}}{\varepsilon}}  &  \left(  W\left(  \check{w}%
_{\varepsilon}\left(  s\right)  \right)  +\left\vert \check{w}_{\varepsilon
}^{\prime}\left(  s\right)  \right\vert ^{2}\right)  \left(  r+\varepsilon s +
\varepsilon\delta_{\varepsilon}\right)  ^{n-1} ds\nonumber\\
&  +\int_{-\frac{r_{1}}{\varepsilon}}^{-\tau_W}\left(  W\left(  \check{w}%
_{\varepsilon}\left(  s\right)  \right)  +\left\vert \check{w}_{\varepsilon
}^{\prime}\left(  s\right)  \right\vert ^{2}\right)  \left(  r+\varepsilon s +
\varepsilon\delta_{\varepsilon}\right)  ^{n-1} ds\label{eta epsilon}\\
&  \leq O\left(  \varepsilon^{2}\right)  + O\left(  \varepsilon \tau_W -
\varepsilon\delta_{\varepsilon}\right)  +\frac{12r^{n-1}}{2+\beta}
\zeta_{\varepsilon}^{(2+\beta)/2}=:\eta_{\varepsilon}\,.\nonumber
\end{align}
Note that, by \eqref{epsilon delta epsilon go to zero},
\begin{equation}\label{eta epsilon to zero}
 \lim_{\varepsilon\to 0+}\eta_\varepsilon=0\,.
\end{equation}

Fix $0<r_1^*<r_{1}$ and $0<r<r_2^*<r_{2}$. We claim that there exist
$a_{\varepsilon}\in\left(  -r_{1}/\varepsilon,-r_1^*/\varepsilon\right)  $ and $b_{\varepsilon}\in\left(  r_2^*/\varepsilon,r_{2}/\varepsilon\right)  $ such that
\begin{equation}
W\left(  \check{w}_{\varepsilon}\left(  a_{\varepsilon}\right)  \right)
+\left\vert \check{w}_{\varepsilon}^{\prime}\left(  a_{\varepsilon}\right)
\right\vert ^{2}\leq c_{1}\varepsilon\eta_{\varepsilon}\quad\text{and\quad
}W\left(  \check{w}_{\varepsilon}\left(  b_{\varepsilon}\right)  \right)
+\left\vert \check{w}_{\varepsilon}^{\prime}\left(  b_{\varepsilon}\right)
\right\vert ^{2}\leq c\varepsilon\eta_{\varepsilon} \label{estimate a and b}%
\end{equation}
for some appropriate constants $c_{1}=c_{1}\left(  r_{1}^*,r_1\right)  >0$ and
$c_{2}=c_{2}\left(  r_{2}^*,r_2\right)  >0$, independent of $\varepsilon$. To prove the existence of
$a_{\varepsilon}$, assume by contradiction that
\[
W\left(  \check{w}_{\varepsilon}\left(  s\right)  \right)  +\left\vert \check
{w}_{\varepsilon}^{\prime}\left(  s\right)  \right\vert ^{2}>c_{1}%
\varepsilon\eta_{\varepsilon}%
\]
for all $s\in\left(  -\frac{r_{1}}{\varepsilon},-\frac{r_1^*}{\varepsilon
}\right)  $. By (\ref{epsilon delta epsilon go to zero}), $\left\vert
\varepsilon\delta_{\varepsilon}\right\vert \leq\frac{1}{2}\left(
r-r_{1}\right)  $ for all $\varepsilon$ sufficiently small, and so, by \eqref{eta epsilon},
\begin{align*}
\eta_{\varepsilon}  &  \geq\int_{-\frac{r_{1}}{\varepsilon}}^{-\tau_W}\left(
W\left(  \check{w}_{\varepsilon}\left(  s\right)  \right)  +\left\vert \check
{w}_{\varepsilon}^{\prime}\left(  s\right)  \right\vert ^{2}\right)  \left(
r+\varepsilon s + \varepsilon\delta_{\varepsilon}\right)  ^{n-1} ds\\
&  \geq\int_{-\frac{r_{1}}{\varepsilon}}^{-\frac{r_1^*}{\varepsilon}}\left(
W\left(  \check{w}_{\varepsilon}\left(  s\right)  \right)  +\left\vert \check
{w}_{\varepsilon}^{\prime}\left(  s\right)  \right\vert ^{2}\right)  \left(
r+\varepsilon s + \varepsilon\delta_{\varepsilon}\right)  ^{n-1} ds\\
&  \geq c_{1}\eta_{\varepsilon}\left(  r-r_{1} + \varepsilon\delta
_{\varepsilon}\right)  ^{n-1} \left(  r_{1}-r_1^*\right) \\
&  \geq c_{1}\eta_{\varepsilon} \left(  \frac{r-r_{1}} {2 }\right)  ^{n-1}
\left(  r_{1}-r_1^*\right)  \,,
\end{align*}
which is a contradiction, provided we take
\[
c_{1}>\frac{2^{n-1}}{\left(  r-r_{1}\right)  ^{n-1} \left(  r_{1}%
-r_1^*\right)  }\,.
\]
This proves the existence of $a_{\varepsilon}$. The proof of the existence of
$b_{\varepsilon}$ is similar, and we omit it.

By (\ref{e-l}), $\check{w}_{\varepsilon}$ satisfies the Euler--Lagrange equation%
\[
-2\check{w}_{\varepsilon}^{\prime\prime}\left(  s\right)  -2\varepsilon\left(
n-1 \right)  \frac{\check{w}_{\varepsilon}^{\prime}\left(  s\right)
}{r+\varepsilon s + \varepsilon\delta_{\varepsilon}}+W^{\prime}\left(  \check
{w}_{\varepsilon}\left(  s\right)  \right)  =-\varepsilon\lambda_{\varepsilon
}\,.
\]
Multiplying the previous equation by $\check{w}_{\varepsilon}^{\prime}\left(
s\right)  $ gives%
\begin{equation}
\left(  -\left\vert \check{w}_{\varepsilon}^{\prime}\left(  s\right)
\right\vert ^{2}+W\left(  \check{w}_{\varepsilon}\left(  s\right)  \right)
\right)  ^{\prime}=2\varepsilon\left(  n-1 \right)  \frac{\left\vert \check
{w}_{\varepsilon}^{\prime}\left(  s\right)  \right\vert ^{2}}{r+\varepsilon s+
\varepsilon\delta_{\varepsilon}}-\varepsilon\lambda_{\varepsilon}\check
{w}_{\varepsilon}^{\prime}\left(  s\right)  \,. \label{e-l w hat}%
\end{equation}
Upon integration between $a_{\varepsilon}$ and $b_{\varepsilon}$ we get%
\begin{align*}
-  &  \left\vert \check{w}_{\varepsilon}^{\prime}\left(  b_{\varepsilon}\right)
\right\vert ^{2}+W\left(  \check{w}_{\varepsilon}\left(  b_{\varepsilon}\right)
\right)  +\left\vert \check{w}_{\varepsilon}^{\prime}\left(  a_{\varepsilon
}\right)  \right\vert ^{2}-W\left(  \check{w}_{\varepsilon}\left(
a_{\varepsilon}\right)  \right) \\
&  =2\varepsilon\left(  n-1 \right)  \int_{a_{\varepsilon}}^{b_{\varepsilon}%
}\frac{\left\vert \check{w}_{\varepsilon}^{\prime}\left(  s\right)  \right\vert
^{2}}{r+\varepsilon s + \varepsilon\delta_{\varepsilon}}\,ds-\varepsilon
\lambda_{\varepsilon}\left(  \check{w}_{\varepsilon}\left(  b_{\varepsilon
}\right)  -\check{w}_{\varepsilon}\left(  a_{\varepsilon}\right)  \right)  \,.
\end{align*}
Since $a_{\varepsilon}\in\left(  -r_{1}/\varepsilon,-r_1^*/\varepsilon\right)  $ and $b_{\varepsilon}\in\left(  r_2^*/\varepsilon,r_{2}/\varepsilon\right)  $ it follows from \eqref{convergence} and the monotonicity of $\check{w}_\varepsilon$  that $\check{w}_{\varepsilon}\left(
a_{\varepsilon}\right)  \rightarrow-1$ and $\check{w}_{\varepsilon}\left(
b_{\varepsilon}\right)  \rightarrow1$. Dividing the previous identity by
$\varepsilon$ and letting $\varepsilon\rightarrow0^{+}$,  by
(\ref{estimate a and b}) we get%
\begin{equation}
\lim_{\varepsilon\rightarrow0^{+}}\left(  2 \left(  n-1 \right)
\int_{a_{\varepsilon}}^{b_{\varepsilon}}\frac{\left\vert \check{w}_{\varepsilon
}^{\prime}\left(  s\right)  \right\vert ^{2}}{r+\varepsilon s+ \varepsilon
\delta_{\varepsilon}}\,ds-\lambda_{\varepsilon}\right)  \,=0\,.
\label{lambda go to 1}%
\end{equation}
Observe that by (\ref{eta epsilon}),%
\begin{align*}
\int_{\tau_{W}}^{b_{\varepsilon}}\frac{\left\vert \check{w}_{\varepsilon
}^{\prime}\left(  s\right)  \right\vert ^{2}}{r+\varepsilon s + \varepsilon
\delta_{\varepsilon}}\,ds  &  \leq\frac{1}{\left(  r + \varepsilon
\delta_{\varepsilon}\right)  ^{n}}\int_{\tau_{W}}^{b_{\varepsilon}}\left\vert
\check{w}_{\varepsilon}^{\prime}\left(  s\right)  \right\vert ^{2}\left(
r+\varepsilon s + \varepsilon\delta_{\varepsilon}\right)^{n-1}  \,ds\\
&  \leq\frac{\eta_{\varepsilon}}{\left(  r + \varepsilon\delta_{\varepsilon
}\right)  ^{n}}\rightarrow0
\end{align*}
where the convergence to zero follows from  \eqref{epsilon delta epsilon go to zero} and \eqref{eta epsilon to zero}. Similarly,
\[
\int_{a_{\varepsilon}}^{-\tau_{W}}\frac{\left\vert \check{w}_{\varepsilon
}^{\prime}\left(  s\right)  \right\vert ^{2}}{r+\varepsilon s + \varepsilon
\delta_{\varepsilon}}\,ds\leq\frac{\eta_{\varepsilon}}{\left(  r-r_{1} +
\varepsilon\delta_{\varepsilon}\right)  ^{n}}\rightarrow0\,.
\]
These inequalities, together with (\ref{hat w epsilon to z}) and
(\ref{lambda go to 1}), imply that
\begin{equation}
\lim_{\varepsilon\rightarrow0^{+}}\lambda_{\varepsilon}=\lambda_{0}:=2 \left(
n-1 \right)  \int_{- \tau_{W}}^{ \tau_{W}}\left\vert z^{\prime}\left(
s\right)  \right\vert ^{2}\,ds = \left(  n-1 \right)  c_{W}\,,
\label{limit lambda}%
\end{equation}
where in the last equality we have used (\ref{constant cW 2}).

\medskip
\noindent {\bf Step 10.} {\it We claim that there exist two constants $a_{1}>0$ and $c_{1}>0$ such that%
\begin{equation}
\check{w}_{\varepsilon}\left(  -\tau_{W}-a_{1}\varepsilon\right)  \leq
-1+c_{1}\varepsilon^{1/\beta} \label{rough estimate}%
\end{equation}
for all $\varepsilon>0$ sufficiently small.
}

Let $b_\varepsilon$ be the number given in \eqref{estimate a and b} and let $t<b_\varepsilon$. 
Integrating (\ref{e-l w hat}) between $t$ and $b_{\varepsilon}$ and using
\eqref{bound w epsilon} and (\ref{limit lambda})  gives%
\begin{align*}
&  -\left\vert \check{w}_{\varepsilon}^{\prime}\left(  b_{\varepsilon}\right)
\right\vert ^{2}+W\left(  \check{w}_{\varepsilon}\left(  b_{\varepsilon}\right)
\right)  +\left\vert \check{w}_{\varepsilon}^{\prime}\left(  t\right)
\right\vert ^{2}-W\left(  \check{w}_{\varepsilon}\left(  t\right)  \right) \\
&  =2\varepsilon\left(  n-1 \right)  \int_{t}^{b_{\varepsilon}}\frac
{\left\vert \check{w}_{\varepsilon}^{\prime}\left(  s\right)  \right\vert ^{2}%
}{r+\varepsilon s-\varepsilon\delta_{\varepsilon}}\,ds -\varepsilon
\lambda_{\varepsilon}\left(  \check{w}_{\varepsilon}\left(  b_{\varepsilon
}\right)  -\check{w}_{\varepsilon}\left(  t\right)  \right)  \geq-c\varepsilon
\end{align*}
for some constant $c>0$ independent of $\varepsilon$. In view of (\ref{estimate a and b}), by taking $c>0$
larger, if necessary, we have%
\[
\left\vert \check{w}_{\varepsilon}^{\prime}\left(  t\right)  \right\vert
^{2}>W\left(  \check{w}_{\varepsilon}\left(  t\right)  \right)  -c\varepsilon
\,.
\]
Since $\check{w}_{\varepsilon}$ is nondecreasing, we obtain that
\[
\check{w}_{\varepsilon}^{\prime}\left(  t\right)  \geq\sqrt{(  W(
\check{w}_{\varepsilon}(  t)  )  -c\varepsilon)  ^{+}%
}\,.
\]
 
 Using the fact that $W$ is continuous and vanishes only at $s=\pm 1$, we find a constant $\mu>0$ such that
\begin{equation}\label{W larger mu}
 W(s)\ge \mu\quad \text{for }-1+a\le s\le 1-a\,,
\end{equation}
where $a$ is the constant in \eqref{W beta}. Therefore, if $c\varepsilon<\mu$ and $\left(  c\varepsilon\right)  ^{1/\beta}< a$ we deduce from \eqref{W beta} that 
$W(s)>c\varepsilon$ for $-1+\left(  c\varepsilon\right)  ^{1/\beta}<s\le 0$. This implies
\[
\check{w}_{\varepsilon}^{\prime}(t)\ge\sqrt{ W\left(  \check{w}_{\varepsilon}\left(  t\right)
\right)  -c\varepsilon}\,.
\]
 for every $-\frac{r}{\varepsilon} -\delta_\varepsilon<t\le 0$ such
that $-1+\left(  c\varepsilon\right)  ^{1/\beta} <\check{w}_{\varepsilon}\left(
t\right)  \le 0$.

Since $\check{w}_{\varepsilon}\left(  0\right)  =0$, it follows upon integration
\[
-t\le \int_{\check{w}_{\varepsilon}(t)}^{0} \frac{1}{\sqrt{W\left(  s \right)
-c\varepsilon}}\,ds\le \int_{-1+\left(  c\varepsilon\right)^{1/\beta}}^{0} \frac{1}{\sqrt{W\left(  s \right)
-c\varepsilon}}\,ds<+\infty\,,
\]
where the last inequality is a consequence of \eqref{W beta}. 
Observe that this inequality provides a bound on all those $t$ such that 
$-1+\left(  c\varepsilon\right)^{1/\beta} <\check{w}_{\varepsilon}\left(
t\right)  \le 0$. Therefore, there exists a largest $t_\varepsilon$ such that 
$-\frac{r}{\varepsilon} -\delta_\varepsilon<t_{\varepsilon}<0$ and $\check{w}_{\varepsilon}\left(
t_{\varepsilon}\right)  =-1+\left(  c\varepsilon\right)  ^{1/\beta}$. Then
\begin{align}
-t_{\varepsilon}  &  \le \int_{-1+ \left(  c\varepsilon\right)  ^{1/\beta}}^{0}
\frac{1}{\sqrt{W\left(  s \right)  -c\varepsilon}}\,ds\nonumber\\
&  =\int_{-1+ \left(  c\varepsilon\right)  ^{1/\beta}}^{-1+a} \frac{1}%
{\sqrt{(  s+1 )  ^{\beta} -c\varepsilon}}\,ds + \int_{-1+ a}^{0}
\frac{1}{\sqrt{W\left(  s \right)  -c\varepsilon}}\,ds \,.
\label{estimate t epsilon}%
\end{align}

The change of variables $\sigma:=\left(  s+1 \right)  ^{\beta}
-c\varepsilon$ yields
\begin{align}
&  \int_{-1+ \left(  c\varepsilon\right)  ^{1/\beta}}^{-1+a} \frac{1}%
{\sqrt{(  s+1 )  ^{\beta} -c\varepsilon}}\,ds= \frac{1}{\beta}%
\int_{0}^{a^{\beta}-c\varepsilon} \frac{1}{\sigma^{1/2}\left(  \sigma
+c\varepsilon\right)  ^{1-1/\beta}}\,d\sigma\nonumber\\
&  <\frac{1}{\beta}\int_{0}^{a^{\beta}}\frac{1}{\sigma^{1/2}\sigma^{1-1/\beta
}}\,d\sigma= \int_{-1}^{-1+ a} \frac{1}{\sqrt{W(  s )  }}\,ds\,,
\label{estimate 44}%
\end{align}
where the last equality follows from direct computation, taking into account
(\ref{W beta}).

By \eqref{W larger mu}  there exists
a constant $L>0$ such that
\begin{equation}
\left\vert \frac{1}{\sqrt{W\left(  s \right)  -\eta}} - \frac{1}%
{\sqrt{W\left(  s \right)  }} \right\vert \leq L\eta
\label{Lipschitz estimate}%
\end{equation}
for $-1+a\leq s\leq 1-a$ and for $0<\eta\leq \mu/2$.

From (\ref{estimate t epsilon}), (\ref{estimate 44}), and
(\ref{Lipschitz estimate}) we get
\[
-t_{\varepsilon}\leq\int_{-1}^{0} \frac{ds}{\sqrt{W\left(  s \right)  }} +
Lc\varepsilon= \tau_{W}+ Lc\varepsilon\,,
\]
where the equality follows from (\ref{tau W}). Since $\check{w}_{\varepsilon}$ is
nondecreasing, we obtain $\check{w}_{\varepsilon}\left(  -\tau_{W}- Lc%
\varepsilon\right)  \leq \check{w}_{\varepsilon}\left(  t_{\varepsilon} \right)
=-1+\left(  c\varepsilon\right)  ^{1/\beta}$, which gives (\ref{rough estimate}) with  
$a_1:=Lc$ and 
$c_{1}:=c^{1/\beta}$.

\medskip
\noindent {\bf Step 11.} {\it  Let $1<\alpha\le 2$ be defined by
\begin{equation}
\alpha:=\frac{1}{\frac{1}{2}+|\beta-\frac{3}{2}|}\,. 
\nonumber
\end{equation}
We claim
there exist two constants $a_2>0$ and $c_{2}>0$
such that%
\begin{equation}\label{one sided lemma}
\check{w}_{\varepsilon}\left(  \tau_{W} + a_{2}\varepsilon\right)  \geq
1-c_{2}\varepsilon^{\alpha}
\end{equation}
for all $\varepsilon>0$ sufficiently small.
}

Since $\check{w}_{\varepsilon}\left(
\frac{R-r}{\varepsilon} - \delta_{\varepsilon}\right)  =1$, integrating (\ref{e-l w hat}) between $t$ and $\frac{R-r}{\varepsilon}-
\delta_{\varepsilon}$  we obtain%
\begin{align*}
&  \left\vert \check{w}_{\varepsilon}^{\prime}\left(  t\right)  \right\vert
^{2}-W\left(  \check{w}_{\varepsilon}\left(  t\right)  \right)  +\varepsilon
\lambda_{\varepsilon}\left(  1-\check{w}_{\varepsilon}\left(  t\right)  \right)
\\
&  =\left\vert \check{w}_{\varepsilon}^{\prime}\left(  \tfrac{R-r}{\varepsilon} -
\delta_{\varepsilon}\right)  \right\vert ^{2}+2\varepsilon\left(  n-1 \right)
\int_{t}^{\frac{R-r}{\varepsilon} - \delta_{\varepsilon}}\frac{\left\vert
\check{w}_{\varepsilon}^{\prime}\left(  s\right)  \right\vert ^{2}%
}{r+\varepsilon s + \varepsilon\delta_{\varepsilon}}\,ds\geq0\,.
\end{align*}
Hence, by (\ref{limit lambda})%
\[
\left\vert \check{w}_{\varepsilon}^{\prime}\left(  t\right)  \right\vert ^{2}
\geq W\left(  \check{w}_{\varepsilon}\left(  t\right)  \right)  -c\varepsilon
\left(  1-\check{w}_{\varepsilon}\left(  t\right)  \right) 
\]
for all $\varepsilon>0$ sufficiently small  and for some constant $c>0$ 
independent of $\varepsilon$.
Since $\check{w}_{\varepsilon}$ is nondecreasing, we deduce that
\[
\check{w}_{\varepsilon}^{\prime}\left(  t\right)  \geq\sqrt{(  W
\check{w}_{\varepsilon}(  t)  )  -c\varepsilon(  1-\check
{w}_{\varepsilon}(  t)  )  )  ^{+}}\,.
\]
For $t>0$ sufficiently small, we have that $0\le\check{w}_{\varepsilon}\left(
t\right)  <1$. 
Since $\check{w}_{\varepsilon}\left(  0\right)  =0$, it  follows upon integration
\[
t\le \int_{0}^{\check{w}_{\varepsilon}(t)}\frac{ds}{\sqrt{(  W(  s )
-c\varepsilon (  1-s)  )  ^{+}}}\,.
\]

Let $\gamma:=\min\{2(\beta-1),1\}$. Note that $\alpha=\frac{1}{\beta-\gamma}$.
Then $\left(  W\left(  s \right)  -c\varepsilon\left(  1-s \right)  \right)
^{+}\geq\left(  W\left(  s \right)  -c\varepsilon\left(  1-s \right)
^{\gamma}\right)  ^{+}$ for $0\leq s\leq1$, hence
\[
t\leq\int_{0}^{\check{w}_{\varepsilon}(t)}\frac{ds}{\sqrt{(  W(  s )
-c\varepsilon(  1-s )  ^{\gamma})  ^{+}}}\,.
\]
By \eqref{W larger mu}  when
$c\varepsilon\le \mu$ we have $W\left(  s \right)  -c\varepsilon\left(  1-s
\right)  ^{\gamma}\geq0$ for $  0\le s\le 1-a  $.  Moreover, if  $\left(  c\varepsilon\right)
^{\alpha}<a$ we obtain 
$W\left(  s \right)  =\left(  1-s \right)  ^{\beta}$ for $1-a\leq
s\leq1-\left(  c\varepsilon\right)  ^{\alpha}$ by \eqref{W beta}.

Using the facts that $\check{w}_{\varepsilon}(0)=0$ and $\check{w}_{\varepsilon}\left(
\frac{R-r}{\varepsilon} - \delta_{\varepsilon}\right)  =1$, there exists
$0<t_{\varepsilon}<\frac{R-r}{\varepsilon} - \delta_{\varepsilon}$ such that $\check{w}_{\varepsilon}\left(  t_{\varepsilon
}\right)  = 1-\left(  c\varepsilon\right)  ^{\alpha}$. Then for $\varepsilon$ sufficiently small
\begin{equation}
t_{\varepsilon}\leq\int_{0}^{1-a} \!\!\!\!\!\! \frac{ds}{\sqrt{ W\left(  s
\right)  -c\varepsilon\left(  1-s \right)  ^{\gamma}}}+ \int_{1-a}^{1-\left(
c\varepsilon\right)  ^{\alpha}} \!\!\!\!\!\!\!\!\!\! \frac{ds}{\left(  1-s
\right)  ^{\gamma/2} \sqrt{(  1-s )  ^{\beta-\gamma}-c\varepsilon}%
}\,. \label{new estimate t epsilon}%
\end{equation}
Consider the change of variables $\sigma:=\left(  1-s \right)  ^{\beta-\gamma
}-c\varepsilon$. Then $\left(  \sigma+c\varepsilon\right)  ^{1/\left(
\beta-\gamma\right)  }=1-s$, and since $2-2\beta+\gamma\leq0$ and $\beta-\gamma\ge 0$,
\begin{align}
&  \int_{1-a}^{1-\left(  c\varepsilon\right)  ^{\alpha}} \!\!\!\!\!\!\!\!\!\!
\frac{ds}{\left(  1-s \right)  ^{\gamma/2} \sqrt{(  1-s )
^{\beta-\gamma}-c\varepsilon}}\nonumber\\
&  \leq\frac{1}{\beta-\gamma}\int_{0}^{a^{\beta-\gamma}-c\varepsilon} \left(
\sigma+c\varepsilon\right)  ^{\left(  2-2\beta+\gamma\right)  /\left(
2\beta-2\gamma\right)  } \sigma^{-1/2}\,d\sigma\nonumber\\
&  \leq\frac{1}{\beta-\gamma}\int_{0}^{a^{\beta-\gamma}} \sigma^{\left(
2-2\beta+\gamma\right)  /\left(  2\beta-2\gamma\right)  } \sigma
^{-1/2}\,d\sigma\nonumber\\
&  =\frac{2}{2-\beta} a^{\frac{2-\beta}{2}}= \int_{1-a}^{1} \frac{ds}%
{\sqrt{W(  s )  }} \,, \label{estimate 47}%
\end{align}
where the last inequality follows from direct computation, taking into account
(\ref{W beta}).

From (\ref{Lipschitz estimate}), (\ref{new estimate t epsilon}), and
(\ref{estimate 47}) we obtain that for $\varepsilon>0$ small enough we have
\[
t_{\varepsilon}\leq\int_{0}^{1} \frac{ds}{\sqrt{W\left(  s \right)  }}+
Lc\varepsilon= \tau_{W}+ Lc\varepsilon\,,
\]
where the equality is a consequence of  (\ref{tau W}). Since $\check{w}_{\varepsilon}$ is
nondecreasing, it follows that $\check{w}_{\varepsilon
}\left(  \tau_{W}+ Lc\varepsilon\right)   \geq \check{w}_{\varepsilon}\left(  t_{\varepsilon
}\right)  = 1-\left(  c\varepsilon\right)  ^{\alpha}$. This concludes the proof of \eqref{one sided lemma} with $a_2:=Lc$ and  $c_2:=c^\alpha$.

\medskip
\noindent {\bf Step 12.} {\it Let $\delta_\varepsilon$ be the constant introduced in Step 7 and let $w_0$ be the function introduced in \eqref{w0}. We claim that
\begin{equation}
\liminf_{\varepsilon\to0+}\int_{\delta_{\varepsilon}+\tau_{W}}^{\frac
{R-r}{\varepsilon}}\left(  w_{\varepsilon}\left(  t\right)  -w_{0}\left(
t\right)  \right)  \left(  r+\varepsilon t\right)  ^{n-1} dt\geq0\,.
\label{volume tail}%
\end{equation}
}

Since $w_{\varepsilon}$ is nondecreasing and $w_{\varepsilon}(\delta
_{\varepsilon}+\tau_{W}+a_2\varepsilon)\geq1-c_{2}\varepsilon^{\alpha}$ by \eqref{one sided lemma}, we have $w_{\varepsilon}(t)\geq1-c_{2}\varepsilon
^{\alpha}>0$ for every $t\geq\delta_{\varepsilon}+\tau_{W}+a_2\varepsilon$. This implies
that
\begin{align}
&  \int_{\delta_{\varepsilon}+\tau_{W}}^{\frac{R-r}{\varepsilon}}\!\!\!\!\!\!\!\left(
w_{\varepsilon}\left(  t\right)  -w_{0}\left(  t\right)  \right)  \left(
r+\varepsilon t\right)  ^{n-1} dt=\int_{\delta_{\varepsilon}+\tau_{W}}^{\delta_{\varepsilon}+\tau_{W}+a_2\varepsilon}\!\!\!\!\!\!\!\!\!\!\!\!\!\!\!\!\!\!\!\!\left(
w_{\varepsilon}\left(  t\right)  -w_{0}\left(  t\right)  \right)  \left(
r+\varepsilon t\right)  ^{n-1} dt \nonumber\\
&+\int_{\delta_{\varepsilon}+\tau_{W}+a_2\varepsilon}^{\left(  \delta_{\varepsilon}%
+\tau_{W}+a_2\varepsilon\right)  ^{+}}\!\!\!\!\!\!\!\!\!\!\!\!\!\!\!\!\!\!\!\!\!\!\!\!\!\!\!\left(  w_{\varepsilon}\left(  t\right)  +1 \right)
\left(  r+\varepsilon t\right)  ^{n-1} dt + \int_{\left(  \delta_{\varepsilon
}+\tau_{W}+a_2\varepsilon\right)  ^{+}}^{\frac{R-r}{\varepsilon}}\!\!\!\!\!\!\!\!\!\!\!\!\!\!\!\!\!\!\!\!\!\!\!\!\!\left(  w_{\varepsilon
}\left(  t\right)  -1 \right)  \left(  r+\varepsilon t\right)  ^{n-1} dt\nonumber\\
& \geq-c_{2} \varepsilon^{\alpha} \int_{\left(  \delta_{\varepsilon}+\tau
_{W}\right)  ^{+}}^{\frac{R-r}{\varepsilon}} \!\!\!\!\!\!\!\!\!\!\!\!\!\!\!\left(  r+\varepsilon t\right)
^{n-1} dt +O(\varepsilon)\geq-c_{2} \varepsilon^{\alpha-1} \frac{R^{n}-r^{n}}{n} +O(\varepsilon)\,,
\label{9000}
\end{align}
where we used \eqref{epsilon delta epsilon go to zero} together with the inequality
\[
 \int_{\delta_{\varepsilon}+\tau_{W}}^{\delta_{\varepsilon}+\tau_{W}+a_2\varepsilon}
 \!\!\!\!\!\!\!\!\!\!\!\!\!\!\!\!\!\!\!\!\!\!\!\left(
w_{\varepsilon}\left(  t\right)  -w_{0}\left(  t\right)  \right)  \left(
r+\varepsilon t\right)  ^{n-1} dt\ge -3\left(r+\varepsilon(|\delta_\varepsilon|+\tau_W+a_2\varepsilon)\right)^{n-1}a_2\varepsilon\,,
 \]
 which follows from \eqref{bound w epsilon}.

Since $\alpha>1$, inequality \eqref{9000} yields (\ref{volume tail}).

\medskip
\noindent {\bf Step 13.} {\it 
 We claim that
\begin{equation}
\liminf_{\varepsilon\to0+}\delta_{\varepsilon} \geq0\,. \label{delta geq 0}%
\end{equation}
}

Assume, by contradiction, that there exists a sequence $\varepsilon
_{j}\rightarrow0+$ such that $\delta_{\varepsilon_{j}} \to\delta_{0}$ for
some $\delta_{0}$ satisfying $-\infty\leq\delta_{0}<0$. This implies, in
particular, that $\delta_{\varepsilon_{j}}-\tau_{W}<0$ for $j$ large enough.
By (\ref{inf w pound}) it follows that
\begin{align*}
&  \int_{-\frac{r}{\varepsilon_{j}}}^{\delta_{\varepsilon_{j}}-\tau_{W}%
}\!\!\!\!\!\!\!\!\!\!\!\!\!\!\!\! (  w_{\varepsilon_{j} }\left(  t\right)  -w_{0}\left(  t\right)
)  \left(  r+\varepsilon_{j} t\right)  ^{n-1} dt =\int_{-\frac
{r}{\varepsilon_{j}}}^{\delta_{\varepsilon_{j}}-\tau_{W}}\!\!\!\!\!\!\!\!\!\!\!\!\!\!\!\!(
w_{\varepsilon_{j} }\left(  t\right)  +1 )  \left(  r+\varepsilon_{j}
t\right)  ^{n-1} dt\\
&  \geq-\left(  \frac{\varepsilon_{j}\lambda_{\varepsilon_{j}}}{\beta}\right)
^{1/\left(  \beta-1\right)  }\frac{r^{n}}{n\,\varepsilon_{j}}=-\frac
{\varepsilon_{j}^{(2-\beta)/(\beta-1)} \lambda_{\varepsilon_{j}}^{1/\left(
\beta-1\right)  }r^{n}}{\beta^{1/\left(  \beta-1\right)  } \,n}\,.
\end{align*}
Since $1<\beta<2$, by (\ref{limit lambda}) we obtain
\[
\liminf_{j\to\infty}\int_{-\frac{r}{\varepsilon_{j}}}^{\delta_{\varepsilon
_{j}}-\tau_{W}}\left(  w_{\varepsilon_{j} }\left(  t\right)  -w_{0}\left(
t\right)  \right)  \left(  r+\varepsilon_{j} t\right)  ^{n-1} dt \geq0\,.
\]

This inequality, together with (\ref{rescaled volume constraint}) and
(\ref{volume tail}), implies that
\[
\limsup_{j\to\infty}\int_{\delta_{\varepsilon_{j}}-\tau_{W}}^{\delta
_{\varepsilon_{j}}+\tau_{W}}\left(  w_{\varepsilon_{j} }\left(  t\right)
-w_{0}\left(  t\right)  \right)  \left(  r+\varepsilon_{j} t\right)  ^{n-1} dt
\leq0\,.
\]
Changing variables, we get
\[
\limsup_{j\to\infty}\int_{-\tau_{W}}^{\tau_{W}}\left(  \check w_{\varepsilon_{j}
}\left(  t\right)  -w_{0}\left(  t+\delta_{\varepsilon_{j}}\right)  \right)
\left(  r+\varepsilon_{j} t + \varepsilon_{j}\delta_{\varepsilon_{j}} \right)
^{n-1} dt \leq0\,.
\]
Since $\varepsilon_{j}\delta_{\varepsilon_{j}} \to0$ by
(\ref{epsilon delta epsilon go to zero}), we conclude that
\begin{equation}\label{700}
 \limsup_{j\to\infty}\int_{-\tau_{W}}^{\tau_{W}}\left(  \check w_{\varepsilon_{j}
}\left(  t\right)  -w_{0}\left(  t+\delta_{\varepsilon_{j}}\right)  \right)
\,dt \leq0\,.
\end{equation}
Using the fact that $\check w_{\varepsilon_{j}}\to z$ strongly in $H^{1}(-\tau_{W},\tau_{W})$, from the
previous inequality we obtain
\begin{equation}
\int_{-\tau_{W}}^{\tau_{W}}\big( z\left(  t\right)  -w_{0}\big( t+\hat
\delta_{0}\big) \big) \,dt \leq0\,, \label{limit volume constraint}%
\end{equation}
where $\hat\delta_{0}:=\max\{\delta_{0},-\tau_{W}\}$. 
Indeed, if $\delta_0\ge -\tau_W$ we pass to the limit in \eqref{700}. If 
$-\infty<\delta_0<-\tau_W$ we use also the equalities 
$w_0(t+\delta_0)=w_0(t+\hat\delta_0)=-1$ for $t\in[-\tau_W,\tau_W]$  
in the case $\delta_0< -\tau_W$. The case $\delta_0=-\infty$ is similar.
Since
\[
\int_{-\tau_{W}}^{\tau_{W}} z\left(  t\right)  \,dt =0 \qquad\hbox{and}\qquad
\int_{-\tau_{W}}^{\tau_{W}}w_{0}( t+\hat\delta_{0}) \,dt =2\hat\delta_{0} \,,
\]
from (\ref{limit volume constraint}) we obtain $2\hat\delta_{0}\geq0$, which
contradicts the inequality $\delta_{0}<0$. This completes the proof of \eqref{delta geq 0}.

\bigskip\medskip
\noindent {\bf Step 14.} {\it We conclude the proof of the theorem by showing that
\eqref{Gamma liminf ineq 2} holds.  }

Let $a_1$ and $a_2$ be the constants given in \eqref{Lipschitz estimate} and \eqref{one sided lemma}. 
By (\ref{Gepsilon}),\eqref{epsilon delta epsilon go to zero}, and \eqref{w check} we have
\begin{align}
\mathcal{H}_{\varepsilon}(w_{\varepsilon})  &  \geq\int_{\delta_{\varepsilon
}-\tau_{W}-a_1\varepsilon}^{\delta_{\varepsilon}+\tau_{W}+a_2\varepsilon} \left(  W\left(  w_{\varepsilon
}\left(  t \right)  \right)  + \left\vert w_{\varepsilon}^{\prime}\left(
t\right)  \right\vert ^{2}\right)  \left(  r+\varepsilon t \right)  ^{n-1}
dt\nonumber\\
&  =\int_{-\tau_{W}-a_1\varepsilon}^{\tau_{W}+a_2\varepsilon}\left(  W\left(  \check{w}_{\varepsilon}\left(  s
\right)  \right)  + \left\vert \check{w}_{\varepsilon}^{\prime}\left(  s\right)
\right\vert ^{2}\right)  \left(  r+\varepsilon s + \varepsilon\delta_{\varepsilon
}\right)  ^{n-1} ds\label{estimate 10}\\
\vphantom{\int_{-\tau_{W}}^{\tau_{W}}}  &  \geq I^{1}_{\varepsilon}
+\varepsilon I^{2}_{\varepsilon} + \varepsilon\delta_{\varepsilon}
I^{3}_{\varepsilon} +\varepsilon I^{4}_{\varepsilon}\,,\nonumber
\end{align}
where
\begin{align*}
I^{1}_{\varepsilon}:=  &  \ 2 r^{n-1} \int_{-\tau_{W}-a_1\varepsilon}^{\tau_{W}+a_2\varepsilon}%
\sqrt{W\left(  \check{w}_{\varepsilon}\left(  s \right)  \right)  } \left\vert
\check{w}_{\varepsilon}^{\prime}\left(  s\right)  \right\vert \,ds\,,\\
I^{2}_{\varepsilon}:=  &  \ \left(  n-1 \right)  r^{n-2} \int_{-\tau_{W}-a_1\varepsilon%
}^{\tau_{W}+a_2\varepsilon}\left(  W\left(  \check{w}_{\varepsilon}\left(  s \right)  \right)
+ \left\vert \check{w}_{\varepsilon}^{\prime}\left(  s\right)  \right\vert
^{2}\right)  s\,ds\,,\\
I^{3}_{\varepsilon}:=  &  \ \left(  n-1 \right)  r^{n-2} \int_{-\tau_{W}-a_1\varepsilon%
}^{\tau_{W}+a_2\varepsilon}\left(  W\left(  \check{w}_{\varepsilon}\left(  s \right)  \right)
+ \left\vert \check{w}_{\varepsilon}^{\prime}\left(  s\right)  \right\vert
^{2}\right)  \,ds\,,\\
I^{4}_{\varepsilon}:=  &  \ \sum_{h=2}^{n-1} \binom{n-1}{h} r^{n-1-h}%
\varepsilon^{h-1}J_{h,\varepsilon}\,,
\end{align*}
with
\[
J_{h,\varepsilon}:= \int_{-\tau_{W}-a_1\varepsilon}^{\tau_{W}+a_2\varepsilon}\left(  W\left(  \check
{w}_{\varepsilon}\left(  s \right)  \right)  + \left\vert \check{w}%
_{\varepsilon}^{\prime}\left(  s\right)  \right\vert ^{2}\right)  \left(
s + \delta_{\varepsilon}\right)  ^{h} ds\,.
\]
To estimate $I^{1}_{\varepsilon}$ we use \eqref{cW} to write
\begin{align}
I^{1}_{\varepsilon}&=    \ 2 r^{n-1} \int_{\check{w}_{\varepsilon} \left(
-\tau_{W}-a_1\varepsilon\right)  }^{\check{w}_{\varepsilon}\left(  \tau_{W}+a_2\varepsilon\right)  }%
\sqrt{W\left(  \sigma\right)  } \,d\sigma\nonumber\\
& =c_{W} r^{n-1} -2 r^{n-1} \int_{-1}^{\check{w}_{\varepsilon} \left(  -\tau_{W}-a_1\varepsilon\right)  }\!\!\!\!\!\!\!\!\!\!\!\!\!\!\!\!\!\!\!\!\!\!\!\!\!\!\!\!\!\!
\sqrt{W\left(  \sigma\right)  }\,d\sigma-2 r^{n-1} \int_{\check{w}_{\varepsilon}
\left(  \tau_{W}+a_2\varepsilon\right)  }^{1} \!\!\!\!\!\!\!\!\!\!\!\!\!\!\!\!\!\!\!\!\!\!\!\!\sqrt{W\left(  \sigma\right)  }\,d\sigma\,.
\label{estimate 104}%
\end{align}

By \eqref{W beta} and (\ref{rough estimate}) we have
\begin{align*}
&  2 r^{n-1}\int_{-1}^{\check{w}_{\varepsilon} \left(  -\tau_{W}-a_1\varepsilon\right)  }
\!\!\!\!\!\!\!\!\!\!\sqrt{W\left(  \sigma\right)  }\,d\sigma\leq2 r^{n-1}\int_{-1}^{-1+c_{1}%
\varepsilon^{1/\beta}}\!\!\!\!\! \sqrt{W\left(  \sigma\right)  }\,d\sigma\\
&  = 2 r^{n-1}\int_{0}^{c_{1}\varepsilon^{1/\beta}}\!\!\!\!\! s^{\beta/2}\,ds
=k_{1} \varepsilon^{(\beta+2)/(2\beta)} \,,
\end{align*}
where $k_{1}:=\frac{4}{\beta+2} r^{n-1} c_{1}^{(\beta+2)/2}$. Similarly, 
by \eqref{one sided lemma} we can find a constant $k_2>0$ such that 
\[
\int_{\check{w}_{\varepsilon} \left(  \tau_{W}+a_2\varepsilon\right)  }^{1}\!\!\!\!\!\sqrt{W\left(
\sigma\right)  }\,d\sigma\leq k_{2} \varepsilon^{(\beta+2)/(2\beta
)}\,,
\]
where we have used the fact that $\alpha>1/\beta$.

Therefore (\ref{estimate 104}) gives
\begin{equation}
I^{1}_{\varepsilon} \geq c_{W}r^{n-1} -k_{0} \varepsilon^{(\beta+2)/(2\beta
)}\, \label{estimate 11}%
\end{equation}
where $k_0:=k_1+k_2$.

By \eqref{constant cW}, \eqref{bound w epsilon}, and (\ref{hat w epsilon to z}) we have
\begin{align}
\lim_{\varepsilon\rightarrow0+} I^{2}_{\varepsilon}=  &  \left(  n-1 \right)
r^{n-2} \!\int_{-\tau_{W}}^{\tau_{W}}\!\!\!\left(  W\left(  z\left(  s
\right)  \right)  + \left\vert z^{\prime}\left(  s\right)  \right\vert
^{2}\right)  \! s\,ds=0\,,\label{estimate 106}\\
\lim_{\varepsilon\rightarrow0+} I^{3}_{\varepsilon}=  &  \left(  n-1 \right)
r^{n-2} \! \int_{-\tau_{W}}^{\tau_{W}}\!\!\!\left(  W\left(  z\left(  s
\right)  \right)  + \left\vert z^{\prime}\left(  s\right)  \right\vert
^{2}\right)  \!ds=\left(  n-1 \right)  c_{W} r^{n-2}, \label{estimate 107}%
\end{align}
where to obtain the second equality in (\ref{estimate 106}) we have used the fact that $z$ is odd and $W$ is even .

Moreover, $J_{h,\varepsilon}\ge 0$ if $h$ is even, while if $h$ is odd
by Fatou's Lemma,   
(\ref{hat w epsilon to z}), and (\ref{delta geq 0})  we obtain
\[
 \liminf_{\varepsilon\to 0+} J_{h,\varepsilon}\ge 
 \int_{-\tau_{W}}^{\tau_{W}}\left(  W\left(  
z\left(  s \right)  \right)  + \left\vert z^{\prime}(s)\right\vert^2\right) s^h ds=0\,,
\]
and the integral is zero because $z$ is odd and $W$ is even. Hence,
\begin{equation}
\liminf_{\varepsilon\rightarrow0+}I^{4}_{\varepsilon}\ge 0\,. \label{estimate 17}%
\end{equation}

From (\ref{estimate 10}) and (\ref{estimate 11}) we obtain
\[
\frac{\mathcal{H}_{\varepsilon}(w_{\varepsilon})- c_{W}r^{n-1}}{\varepsilon}
\geq-k_{0} \varepsilon^{(2-\beta)/(2\beta)}+ I^{2}_{\varepsilon}
+\delta_{\varepsilon} I^{3}_{\varepsilon}+I^{4}_{\varepsilon}\,.
\]
Since $1<\beta<2$, using (\ref{delta geq 0}), (\ref{estimate 106}),
(\ref{estimate 107}), and (\ref{estimate 17}) we conclude that
(\ref{Gamma liminf ineq 2}) holds.
\end{proof}

\section{The Limsup Inequality}

In this section we prove the following theorem.

\begin{theorem}
Let $E_{0}$ be a solution to the minimum problem (\ref{min perimeter Omega}) and let $u_0:=1-2\chi_{E_0}$.
Then there exists a sequence $\left\{  u_{\varepsilon}\right\}  \subset
H^{1}\left(  \Omega\right)  $ converging to $u_{0}$ strongly in $L^{1}(\Omega)$ and
satisfying (\ref{constraint}) such that
\begin{equation}
\limsup_{\varepsilon\rightarrow0+}
\frac{\mathcal{E}_{\varepsilon}\left(
u_{\varepsilon}\right)  - n\kappa_{\Phi} c_{W}r^{n-1}}{\varepsilon}\leq0\,. \label{gamma limsup}%
\end{equation}
\label{theorem gamma limsup}
\end{theorem}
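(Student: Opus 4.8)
The plan is to take as recovery sequence the radial profile built from the optimal profile $z$ of \eqref{Cauchy problem}, rescaled by $\varepsilon$ and with its ``radius'' tuned so that the mass constraint holds exactly. Since $E_{0}$ solves \eqref{min perimeter Omega}, by the discussion preceding Theorem \ref{theorem gamma convergence} we have $E_{0}=B^{\Phi^{\circ}}_{r}(x_{0})$ for some $x_{0}\in\Omega$ with $B^{\Phi^{\circ}}_{r}(x_{0})\subset\Omega$ and $r$ as in \eqref{r}. By assumption \eqref{enlarged ball} there are $y\in\Omega$ and $\delta>0$ with $B^{\Phi^{\circ}}_{r}(x_{0})\subset B^{\Phi^{\circ}}_{r+\delta}(y)\subset\Omega$; sliding the centre from $x_{0}$ towards $y$ gives Wulff balls $B^{\Phi^{\circ}}_{r}(x_{j})$ with $\overline{B^{\Phi^{\circ}}_{r}(x_{j})}\subset\Omega$ and $\chi_{B^{\Phi^{\circ}}_{r}(x_{j})}\to\chi_{E_{0}}$ in $L^{1}$, so, by the lower semicontinuity of the $\Gamma$-limsup of $v\mapsto(\mathcal{F}_{\varepsilon}(v)-n\kappa_{\Phi}c_{W}r^{n-1})/\varepsilon$, it suffices to treat the case in which $B^{\Phi^{\circ}}_{r+\eta}(x_{0})\subset\Omega$ for some $\eta>0$. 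In that case, for a radius $r_{\varepsilon}$ to be chosen I would set
\[
\hat{u}_{\varepsilon}(x):=z\!\left(\frac{\Phi^{\circ}(x-x_{0})-r_{\varepsilon}}{\varepsilon}\right),
\]
so that $\hat{u}_{\varepsilon}\equiv-1$ on $B^{\Phi^{\circ}}_{r_{\varepsilon}-\varepsilon\tau_{W}}(x_{0})$, $\hat{u}_{\varepsilon}\equiv1$ outside $B^{\Phi^{\circ}}_{r_{\varepsilon}+\varepsilon\tau_{W}}(x_{0})$, and the transition occurs on the thin Wulff annulus in between. As $z$ is Lipschitz with values in $[-1,1]$ and $\Phi^{\circ}(\cdot-x_{0})$ is Lipschitz, $\hat{u}_{\varepsilon}\in H^{1}(\Omega)$; once $r_{\varepsilon}\to r$ is known, for $\varepsilon$ small this annulus lies in $B^{\Phi^{\circ}}_{r+\eta}(x_{0})\subset\Omega$, whence $\hat{u}_{\varepsilon}=1$ on $\partial\Omega$, and $\hat{u}_{\varepsilon}\to u_{0}$ in $L^{1}(\Omega)$ since the annulus has measure $O(\varepsilon)$ and $|B^{\Phi^{\circ}}_{r_{\varepsilon}}(x_{0})|\to|E_{0}|$.

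Next I would fix $r_{\varepsilon}$ so that $\int_{\Omega}\hat{u}_{\varepsilon}\,dx=m$. By \eqref{integration polar}, writing the contribution of the annulus via $s=\rho+\varepsilon t$ and using the oddness of $z$ (so $\int_{-\tau_{W}}^{\tau_{W}}z(t)\,dt=0$), one gets, uniformly for $\rho$ in a neighbourhood of $r$,
\[
\int_{\Omega}z\!\left(\frac{\Phi^{\circ}(x-x_{0})-\rho}{\varepsilon}\right)dx=|\Omega|-\kappa_{\Phi}\bigl[(\rho-\varepsilon\tau_{W})^{n}+(\rho+\varepsilon\tau_{W})^{n}\bigr]+n\kappa_{\Phi}\varepsilon\!\int_{-\tau_{W}}^{\tau_{W}}\!\!z(t)(\rho+\varepsilon t)^{n-1}dt=|\Omega|-2\kappa_{\Phi}\rho^{n}+O(\varepsilon^{2}),
\]
the potential $O(\varepsilon)$ term in the last integral being killed by $\int z=0$. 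For fixed small $\varepsilon$ the left-hand side is continuous and strictly decreasing in $\rho$, and since $m=|\Omega|-2\kappa_{\Phi}r^{n}$ it equals $m+O(\varepsilon^{2})$ at $\rho=r$; the intermediate value theorem then yields a (unique) $r_{\varepsilon}=r+O(\varepsilon^{2})$ with $\int_{\Omega}\hat{u}_{\varepsilon}\,dx=m$. With this choice $\hat{u}_{\varepsilon}$ satisfies \eqref{constraint}, and we set $u_{\varepsilon}:=\hat{u}_{\varepsilon}$.

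For the energy, $W(\hat{u}_{\varepsilon})$ and $\nabla\hat{u}_{\varepsilon}$ vanish off the transition annulus, so applying \eqref{integration polar} and \eqref{derivative polar} to the nondecreasing profile $s\mapsto z((s-r_{\varepsilon})/\varepsilon)$ and changing variables $s=r_{\varepsilon}+\varepsilon t$,
\[
\mathcal{E}_{\varepsilon}(u_{\varepsilon})=n\kappa_{\Phi}\int_{-\tau_{W}}^{\tau_{W}}\bigl(W(z(t))+|z'(t)|^{2}\bigr)(r_{\varepsilon}+\varepsilon t)^{n-1}\,dt.
\]
Since $W$ is even and $z$ is odd, $t\mapsto W(z(t))+|z'(t)|^{2}$ is even, hence $\int_{-\tau_{W}}^{\tau_{W}}\bigl(W(z(t))+|z'(t)|^{2}\bigr)t\,dt=0$; expanding $(r_{\varepsilon}+\varepsilon t)^{n-1}=r^{n-1}+(n-1)r^{n-2}\varepsilon t+O(\varepsilon^{2})$ (recall $r_{\varepsilon}=r+O(\varepsilon^{2})$) and using \eqref{constant cW}, the first-order term again cancels and $\mathcal{E}_{\varepsilon}(u_{\varepsilon})=n\kappa_{\Phi}c_{W}r^{n-1}+O(\varepsilon^{2})$. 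Dividing by $\varepsilon$ gives $(\mathcal{E}_{\varepsilon}(u_{\varepsilon})-n\kappa_{\Phi}c_{W}r^{n-1})/\varepsilon=O(\varepsilon)\to0$, which is \eqref{gamma limsup}.

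The heart of the argument — and the only genuine difficulty — is the twin cancellation produced by the evenness of $W$ together with the oddness of $z$: it forces both the mass defect and the energy excess of the symmetrically centred profile to be $O(\varepsilon^{2})$ rather than $O(\varepsilon)$. Were either only $O(\varepsilon)$, matching the mass would require displacing the interface at scale $\varepsilon$, which would perturb the leading term $c_{W}\operatorname{P}_{\Phi}(B^{\Phi^{\circ}}_{r_{\varepsilon}}(x_{0}))$ by $O(\varepsilon)$ of an uncontrolled sign and spoil \eqref{gamma limsup}. Assumption \eqref{enlarged ball} plays a secondary but essential role: it provides the geometric room needed to slide $r_{\varepsilon}$ and to keep the transition layer, and hence the boundary value $1$, strictly inside $\Omega$.
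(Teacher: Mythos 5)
Your construction is correct, but it departs from the paper's proof at exactly the two points where the naive radial profile fails to be admissible, so the comparison is worth recording. The paper keeps the radius fixed at $r$ and instead (i) translates the centre by $\varepsilon\gamma(y_0-x_0)$, using \eqref{enlarged ball} together with the subadditivity of $\Phi^{\circ}$ to fit the transition annulus inside $\Omega$ even when $B^{\Phi^{\circ}}_{r}(x_0)$ touches $\partial\Omega$, and (ii) restores the mass by subtracting $\omega_\varepsilon\varphi$ for a fixed bump $\varphi$ supported where $\hat u_\varepsilon\equiv-1$, paying an extra energy of order $\varepsilon^{-1}|\omega_\varepsilon|^{\beta}+\varepsilon|\omega_\varepsilon|^{2}$, which after division by $\varepsilon$ vanishes precisely because $\omega_\varepsilon=O(\varepsilon^2)$ and $\beta>1$. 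You instead (i) reduce to the case $B^{\Phi^{\circ}}_{r+\eta}(x_0)\subset\Omega$ by approximating $u_0$ with translates and invoking the lower semicontinuity of the $\Gamma$-limsup --- which for the family indexed by $\varepsilon>0$ amounts to a routine diagonal selection that should be spelled out, as should the observation that each translate is again a minimizer with the same energy $n\kappa_\Phi c_W r^{n-1}$ --- and (ii) match the mass exactly by tuning the radius. For the latter, the bare intermediate value theorem gives existence of $r_\varepsilon$ but not the rate; the bound $|r_\varepsilon-r|=O(\varepsilon^2)$ follows from evaluating your uniform expansion at $\rho=r\pm C\varepsilon^2$ with $C$ larger than the constant in the $O(\varepsilon^2)$ error, so this is a one-line fix rather than a gap. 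Both proofs rest on the same twin cancellation (oddness of $z$, evenness of $W$) making the mass defect and the energy excess $O(\varepsilon^2)$, and both use \eqref{enlarged ball} as the source of geometric slack. What your route buys is that it never uses $\beta>1$ in this half of the argument (only $\tau_W<\infty$, i.e.\ $\beta<2$), since no bump correction is needed; what it costs is the soft reduction step, whereas the paper's recovery sequence is written down in closed form for every minimizer at once.
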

Note that by \eqref{minimal perimeter value} the inequality \eqref{gamma limsup} is equivalent to \eqref{second gamma limsup}.

\begin{proof}
As observed in in Section \ref{s:polars}, the set $E_{0}$ has the form
$B^{\Phi^{\circ}}_{r}\!\!\left(  x_{0} \right)  $, with $B^{\Phi^{\circ}}%
_{r}\!\!\left(  x_{0} \right)  \subset\Omega$ and $r$ defined by (\ref{r}).
We recall that by hypothesis \eqref{enlarged ball} there exist $y_{0}\in\Omega$ and $\delta>0$ satisfying $B^{\Phi^{\circ}}_{r}\!\!\left(  x_0 \right)  \subset B^{\Phi^{\circ}%
}_{r+\delta}\!\left(  y_0 \right)  \subset\Omega$. 

We claim that $\Phi^{\circ}\left(  x_{0}-y_{0} \right)  \leq\delta$. If
$\Phi^{\circ}\left(  x_{0}-y_{0} \right)  =0$ then the inequality is trivial. If
not, let $\left\{  \lambda_{k} \right\}  $ be an increasing sequence
converging to $r/\Phi^{\circ}\left(  x_{0}-y_{0} \right)  $. Since
$x_{0}+\lambda_{k} \left(  x_{0}-y_{0} \right)  \in B^{\Phi^{\circ}}%
_{r}\!\!\left(  x_{0} \right)  $,  we have
$x_{0}+\lambda_{k} \left(  x_{0}-y_{0} \right)  \in B^{\Phi^{\circ}}%
_{r+\delta}\!\left(  y_{0} \right)  $, hence $\left(  1+\lambda_{k} \right)
\Phi^{\circ}\left(  x_{0}-y_{0} \right)  \leq r+\delta$. Passing to the limit
as $k\to+\infty$, we obtain $\Phi^{\circ}\left(  x_{0}-y_{0} \right)  +r \leq r+\delta$, which
implies $\Phi^{\circ}\left(  x_{0}-y_{0} \right)  \leq\delta$.

Since
$\Phi^{\circ}$ is convex and positively homogeneous of degree one, it is subadditive and so the
previous inequality gives
\begin{equation}
B^{\Phi^{\circ}}_{r+t\delta}\!\left(  x_{0} + t \left(  y_{0}-x_{0} \right)
\right)  \subset B^{\Phi^{\circ}}_{r+\delta}\!\left(  y_{0} \right)
\subset\Omega\label{inclusion}%
\end{equation}
for every $0\le t\le 1  $.

Let $z$ be the function defined by (\ref{Cauchy problem}). We recall that $z$ is odd, $|z(t)|\le 1$ for every $t\in\mathbb{R}$, and $z(t)=1$ if $t\ge \tau_W$, where $\tau_{W}$ is defined by (\ref{tau W}). Let
\begin{equation}
\hat{u}_{\varepsilon}\left(  x \right)  := z\left(  \frac{\Phi^{\circ
}\!\left(  x - x_{0} - \varepsilon\gamma\left(  y_{0}-x_{0} \right)  \right)
- r}{\varepsilon} \right)  \quad\text{for } x\in\Omega\,,
\label{hat u epsilon}%
\end{equation}
where $\gamma:= \tau_{W} /\delta$. Then $\hat{u}_{\varepsilon}\in H^{1}\left(
\Omega\right)  $ and $\hat{u}_{\varepsilon}\rightarrow u_{0}=1-2\chi
_{B^{\Phi^{\circ}}_{r}\!\left(  x_{0} \right)  } $ strongly in $L^{1}(\Omega)$. Since
$\hat{u}_{\varepsilon}=1$ on $\Omega\setminus B^{\Phi^{\circ}}_{r+\varepsilon\tau_{W}%
}\!\!\left(  x_{0} + \varepsilon\gamma\left(  y_{0}-x_{0} \right)  \right)  $,
it follows from (\ref{inclusion}) that $\hat{u}_{\varepsilon}=1$ on
$\partial\Omega$ for $\varepsilon>0$ sufficiently small. 
 
 Observe that $\hat{u}_{\varepsilon}$ may not satisfy the mass constraint in \eqref{constraint}, and so we estimate the possible error
 $$\omega_{\varepsilon}    :=\int_{\Omega}\hat{u}_{\varepsilon}\left(  x
\right)  dx- m\,. $$
Using the fact that   $\hat{u}_{\varepsilon}=-1$ on
$B^{\Phi^{\circ}}_{r-\varepsilon\tau_{W}}\!\!\left(  x_{0} + \varepsilon
\gamma\left(  y_{0}-x_{0} \right)  \right)  $, by (\ref{kappa Phi})
and (\ref{integration polar}) we get
\begin{align*}
\omega_{\varepsilon}  &  = \left\vert \Omega\setminus B^{\Phi^{\circ}}_{r+\varepsilon\tau_{W}%
}\!\!\left(  x_{0} + \varepsilon\gamma\left(  y_{0}-x_{0} \right)  \right)\right\vert 
- \left\vert B^{\Phi^{\circ}}_{r-\varepsilon\tau_{W}}\!\!\left(  x_{0} +
\varepsilon\gamma\left(  y_{0}-x_{0} \right)  \right)  \right\vert
\\
& \quad  +
n\kappa_{\Phi} \int_{r-\varepsilon\tau_{W}}^{r+\varepsilon\tau_{W}}
z\left(  \tfrac{\rho-r}{\varepsilon} \right)  \rho^{n-1}d\rho-m \\
&  =\ \left\vert \Omega\right\vert - m - \kappa_{\Phi}\! \left(  \left(
r+\varepsilon\tau_{W}\right)  ^{n} + \left(  r-\varepsilon\tau_{W}\right)
^{n} - n\varepsilon\int_{-\tau_{W}}^{\tau_{W}} z\left(  t \right)  \left(
r+\varepsilon t\right)  ^{n-1} dt \right)  .
\end{align*}
Since $z$ is odd, by  (\ref{constraint reached}) there
exists a constant $M>0$ such that
\begin{equation}
\left\vert \omega_{\varepsilon} \right\vert \leq M\varepsilon^{2}
\label{estimate omega epsilon}%
\end{equation}
for $\varepsilon>0$ sufficiently small.

We now correct $\hat{u}_{\varepsilon}$ in order to satisfy the mass constraint in \eqref{constraint}. We fix $\varphi\in C^{\infty}_{c}\left(\mathbb{R}^n
 \right)  $ with support contained in $B^{\Phi^{\circ}}_{r/2}\!\left(  x_{0} \right)$ and 
\begin{equation}
\int_{\mathbb{R}^n} \varphi\left(  x\right)   dx=1\,,
\label{integral = 1}%
\end{equation}
and we define
\begin{equation}
u_{\varepsilon}\left(  x \right)    :=\hat{u}_{\varepsilon}\left(  x
\right)  -\omega_{\varepsilon} \varphi\left(  x \right)  \,.
\label{u epsilon}%
\
\end{equation}

Taking into account the definition of $\omega_{\varepsilon}$, we find that  by
 (\ref{integral = 1}), 
$u_{\varepsilon}$ satisfies the mass constraint in (\ref{constraint}). Since $B^{\Phi^{\circ}}_{r/2}\!\left(  x_{0} \right)\subset 
 B^{\Phi^{\circ}}_{r-\varepsilon\tau_{W}}\!\!\left(  x_{0} + \varepsilon
\gamma\left(  y_{0}-x_{0} \right)  \right)$, the support of $\varphi$ is contained in $\{\hat{u}_\varepsilon=-1\}$. Hence $u_\varepsilon$  still
satisfies the boundary condition in  (\ref{constraint}) for $\varepsilon>0$ sufficiently small, 
and 
\begin{align*}
 \int_{\Omega}W(u_\varepsilon(x))\,dx
 &=\int_{\{\hat{u}_\varepsilon\ne -1\}}W(\hat{u}_\varepsilon(x))\,dx
 +\int_{\{\hat{u}_\varepsilon= -1\}}W(-1-\omega_\varepsilon\varphi(x))\,dx\\
 &=\int_{\Omega}W(\hat{u}_\varepsilon(x))\,dx
 +\int_{\Omega}W(-1-\omega_\varepsilon\varphi(x))\,dx\,,
\end{align*}
where the last equality follows from the fact that $W(-1)=0$.  
Since the supports of $\nabla \hat{u}_\varepsilon$ and $\nabla \varphi$ are disjoint, the previous equality implies that 
\begin{equation}
\mathcal{E}_{\varepsilon}\left(  u_{\varepsilon} \right)  =
 \mathcal{E}_{\varepsilon}\left(  \hat{u}_\varepsilon \right)  +\mathcal{E}_{\varepsilon}\left( -1-\omega_\varepsilon \varphi\right)\,.
\label{sum} 
\end{equation}

By (\ref{derivative polar}), (\ref{hat u epsilon}),  and
(\ref{u epsilon}) we have
\begin{equation*}
\Phi\left(  \nabla \hat{u}_{\varepsilon}\left(  x \right)  \right)  =   \frac
{1}{\varepsilon} z^{\prime}\left(  \frac{ \Phi^{\circ}\!\left(  x - x_{0} -
\varepsilon\gamma\left(  y_{0}-x_{0} \right)  \right)  - r}{\varepsilon}
\right) 
\end{equation*}
for a.e.\ $x\in\Omega$. Since  $z(t)=\pm1$ for $\pm t\geq\tau_{W}$, by
(\ref{integration polar}) and by the equality $W\left(  \pm1 \right)  =0$ we
obtain%
\begin{align}
\mathcal{E}_{\varepsilon}\left(  \hat{u}_\varepsilon \right)  &= \frac{n\kappa_{\Phi}}{\varepsilon} \int_{r-\varepsilon\tau_{W}%
}^{r+\varepsilon\tau_{W}}\left(  W\left(  z\left(  \tfrac{\rho-r}{\varepsilon}
\right)  \right)  + \left\vert z^{\prime}\left(  \tfrac{\rho-r}{\varepsilon}
\right)  \right\vert ^{2} \right)  \rho^{n-1}d\rho\nonumber\\
&  =n\kappa_{\Phi} \int_{-\tau_{W}}^{\tau_{W}}\left(  W\left(  z\left(  t
\right)  \right)  + \left\vert z^{\prime}\left(  t \right)  \right\vert ^{2}
\right)  \left(  r+\varepsilon t \right)  ^{n-1} dt \label{E first}\\
  & = \vphantom{ \int_{0}^{R/2}} n\kappa_{\Phi} c_{W} r^{n-1}+O(\varepsilon
^{2}) \,,\nonumber
\end{align}
where we used the change of variables $t= \frac{\rho-r}{\varepsilon}$ and
(\ref{constant cW}), and  in the last equality we used  (\ref{constant cW}%
), taking into account once again the fact that $t\mapsto( W\left(  z\left(  t
\right)  \right)  + \left\vert z^{\prime}\left(  t \right)  \right\vert ^{2} )
t$ is odd.

On the other hand, by \eqref{W beta} and \eqref{growth Gamma},
\begin{align}\label{E second}
 \mathcal{E}_{\varepsilon}\left( -1-\omega_\varepsilon \varphi\right)
 \le \frac{|\omega_\varepsilon|^\beta}{\varepsilon}\int_{\Omega}|\varphi|^\beta\,dx
 +C_\Phi\varepsilon|\omega_\varepsilon|^2\int_{\Omega}|\nabla\varphi|^2\,dx\,.
\end{align}
From \eqref{sum}, \eqref{E first} and 
\eqref{E second} we get
\begin{align*}
&  \frac{\mathcal{E}_{\varepsilon}\left(  u_{\varepsilon}\right)
-n\kappa_{\Phi} c_{W} r^{n-1} }{\varepsilon}\\
&  \qquad\le\frac{|\omega_\varepsilon|^\beta}{\varepsilon^2}\int_{\Omega}|\varphi|^\beta\,dx
 +C_\Phi|\omega_\varepsilon|^2\int_{\Omega}|\nabla\varphi|^2\,dx
+O(\varepsilon)\,.
\end{align*}
Recalling that $\beta>1$, from (\ref{estimate omega epsilon}) we obtain
(\ref{gamma limsup}).
\end{proof}

\section*{Acknowledgments}

The authors warmly thank the Center for Nonlinear Analysis (NSF Grant No.
 DMS-0635983), where part of this research was carried out. The
research of I. Fonseca was partially funded by the National Science Foundation
under Grant No. DMS-0905778 and that of G. Leoni under Grant No. DMS-1007989.
I. Fonseca and G. Leoni also acknowledge support of the National Science
Foundation under the PIRE Grant No. OISE-0967140. The research of G. Dal Maso
was also supported by by the Italian Ministry of Education, University, and
Research under the Project ``Variational Problems with Multiple Scales'' 2008
and by the European Research Council under Grant No. 290888 ``Quasistatic and
Dynamic Evolution Problems in Plasticity and Fracture''. 
The authors wish to thank Matteo Focardi for several discussions on the subject of this paper 
and  Michael Goldman, who called their attention to reference \cite{AFTL}.
The results of this paper led to a dramatic simplification in the proof of the
$\Gamma$-limsup inequality and were crucial in the proof of the $\Gamma
$-liminf inequality in the anisotropic case.

\end{document}